\def\input@path{{"/Users/russw/Documents/Research/mypapers/Strict Erdos-Ko-Rado theorems for simplicial complexes/"}}
\numberwithin{equation}{section}
\numberwithin{figure}{section}
\theoremstyle{plain}
\newtheorem{thm}{\protect\theoremname}[section]
\theoremstyle{plain}
\newtheorem{conjecture}[thm]{\protect\conjecturename}
\theoremstyle{plain}
\newtheorem{cor}[thm]{\protect\corollaryname}
\theoremstyle{plain}
\newtheorem{lem}[thm]{\protect\lemmaname}
\theoremstyle{definition}
\newtheorem{example}[thm]{\protect\examplename}
\theoremstyle{plain}
\newtheorem{prop}[thm]{\protect\propositionname}
\theoremstyle{remark}
\newtheorem{rem}[thm]{\protect\remarkname}
\providecommand{\conjecturename}{Conjecture}
\providecommand{\corollaryname}{Corollary}
\providecommand{\examplename}{Example}
\providecommand{\lemmaname}{Lemma}
\providecommand{\propositionname}{Proposition}
\providecommand{\remarkname}{Remark}
\providecommand{\theoremname}{Theorem}
\begin{document}
\global\long\def\link{\operatorname{link}}%

\global\long\def\del{\operatorname{del}}%

\global\long\def\cone{\operatorname{Cone}}%

\global\long\def\depth{\operatorname{depth}}%

\global\long\def\shift{\operatorname{Shift}}%

\global\long\def\bdry{\partial}%

\global\long\def\symdiff{\ominus}%

\global\long\def\shiftext{\shift^{\wedge}}%

\global\long\def\extalg{\bigwedge}%

\global\long\def\ff{\mathbb{F}}%

\title{Strict Erd\H{o}s-Ko-Rado theorems for simplicial complexes}
\author{Denys Bulavka and Russ Woodroofe}
\thanks{Work of the first author is supported in part by the Israel Science
Foundation grant ISF-2480/20. Work of the second author is supported
in part by the Slovenian Research Agency (research program P1-0285
and research projects J1-9108, N1-0160, J1-2451, J1-3003, and J1-50000).}
\address{Einstein Institute of Mathematics, Hebrew University, Jerusalem 91904,
Israel}
\email{Denys.Bulavka@mail.huji.ac.il}
\urladdr{\url{https://kam.mff.cuni.cz/~dbulavka/}}
\address{Univerza na Primorskem, Glagoljaška 8, 6000 Koper, Slovenia}
\email{russ.woodroofe@famnit.upr.si}
\urladdr{\url{https://osebje.famnit.upr.si/~russ.woodroofe/}}
\begin{abstract}
We show that if a simplicial complex $\Delta$ is a near-cone of sufficiently
high depth, then the only maximum families of small pairwise intersecting
faces are those with a common intersection. Thus, near-cones of sufficiently
high depth satisfy the strict Erd\H{o}s-Ko-Rado property conjectured
by Holroyd and Talbot and by Borg. One consequence is a strict Erd\H{o}s-Ko-Rado
theorem for independence complexes of chordal graphs with an isolated
vertex. Under stronger shiftedness conditions, we prove a sharper
stability theorem of Hilton-Milner type, as well as two cross-intersecting
theorems.
\end{abstract}

\maketitle

\section{\label{sec:Introduction}Introduction}

What is the largest cardinality of a family of pairwise-intersecting
sets? A 1961 result of Erd\H{o}s, Ko, and Rado answers this question
if the sets all have the same small number of elements (and have no
other restrictions).
\begin{thm}[Erd\H{o}s, Ko, and Rado \cite{Erdos/Ko/Rado:1961}]
\label{thm:EKR} Let $r\leq n/2$. If $\mathcal{A}$ is a family
of pairwise-intersecting subsets of $[n]$, each with $r$ elements,
then $\left|\mathcal{A}\right|\leq{n-1 \choose r-1}$.

If $\left|\mathcal{A}\right|$ achieves the upper bound and $r<n/2$,
then $\mathcal{A}$ consists of all the $r$ element subsets containing
some fixed element.
\end{thm}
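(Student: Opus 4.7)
The plan is to use shifting together with cycle-based counting. For $1 \le i < j \le n$, I would define the $(i,j)$-shift $S_{ij}(\mathcal{A})$ that, for each $A \in \mathcal{A}$ with $j \in A$, $i \notin A$, and $(A\setminus\{j\})\cup\{i\} \notin \mathcal{A}$, replaces $A$ by $(A\setminus\{j\})\cup\{i\}$, leaving other sets fixed. Standard checks show $S_{ij}$ preserves cardinality and the intersecting property, and iteration terminates at a shifted family $\mathcal{A}^*$ of the same size as $\mathcal{A}$.

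For the cardinality bound, I favor Katona's cycle method, which bypasses the shifting analysis. Fix a cyclic ordering $\sigma$ of $[n]$ and call $A \in \binom{[n]}{r}$ a cyclic arc under $\sigma$ if $A$ consists of $r$ consecutive elements. A short direct argument shows that at most $r$ cyclic $r$-arcs can be pairwise intersecting when $r \le n/2$. Averaging over all $(n-1)!$ cyclic orderings of $[n]$, and noting that each $r$-set is an arc under exactly $r!\,(n-r)!$ orderings, yields $|\mathcal{A}| \le \binom{n-1}{r-1}$.

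For uniqueness with $r < n/2$, I would return to the shifted family $\mathcal{A}^*$. The first step is to verify that the unique shifted extremal intersecting family is the star $\{A \in \binom{[n]}{r} : 1 \in A\}$: using shiftedness and the strict inequality $2r < n$, any $A \in \mathcal{A}^*$ omitting $1$ can be pushed down to produce a set disjoint from $\{1,2,\dots,r\} \in \mathcal{A}^*$, contradicting the intersecting property. The main obstacle is then descending from $\mathcal{A}^*$ back to $\mathcal{A}$, since shifting can turn a non-star into a star and so the non-star property is not preserved under shifts. My preferred route is to invoke the Hilton--Milner stability theorem: any intersecting family of $r$-subsets of $[n]$ with $r < n/2$ and empty common intersection satisfies $|\mathcal{A}| \le \binom{n-1}{r-1} - \binom{n-r-1}{r-1} + 1$, which is strictly less than $\binom{n-1}{r-1}$ once $r \ge 2$ (since then $\binom{n-r-1}{r-1}\ge 2$); combined with the trivial $r=1$ case this forces any extremal $\mathcal{A}$ to have a common element. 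Proving Hilton--Milner itself is the real technical heart here, typically handled by a further shifting analysis.
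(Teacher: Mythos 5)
The paper does not prove this theorem at all: it is quoted as the classical Erd\H{o}s--Ko--Rado result with a citation to \cite{Erdos/Ko/Rado:1961}, so there is no in-paper argument to compare yours against. Judged on its own, your proof of the upper bound via Katona's cycle method is correct and complete in outline: at most $r$ pairwise-intersecting $r$-arcs on a cycle when $r\leq n/2$, each $r$-set is an arc in exactly $r!\,(n-r)!$ of the $(n-1)!$ cyclic orders, and the averaging gives $\binom{n-1}{r-1}$.

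Two issues with the uniqueness part. First, the concrete argument you do sketch is wrong as stated: if $A\in\mathcal{A}^*$ omits $1$, shiftedness only lets you push $A$ \emph{down}, and the minimal such set is $\{2,\dots,r+1\}$, which meets $\{1,\dots,r\}$; the claim ``shifted, intersecting, $2r<n$ implies star'' is in fact false without extremality (take $\{\{1,2\},\{1,3\},\{2,3\}\}$ in $[5]$ with $r=2$). The correct shifted-case argument needs the extremal cardinality, and you never supply it. Second, the route you ultimately prefer --- deducing uniqueness from Hilton--Milner --- is logically valid (for $r\geq 2$ and $n\geq 2r+1$ one has $\binom{n-r-1}{r-1}\geq r\geq 2$, so a family with empty common intersection is strictly sub-extremal, and $r=1$ is trivial), but it outsources the entire difficulty to a strictly stronger theorem that you explicitly decline to prove. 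If Hilton--Milner may be cited as known (as this paper does cite it), your argument closes; as a self-contained proof of the uniqueness statement, it has a hole exactly where you say the ``real technical heart'' lies, and the only original uniqueness argument offered is the flawed one above.
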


That is, a maximum family of sufficiently small pairwise-intersecting
objects (a maximum \emph{intersecting family}) is given by a family
having a common intersection. Moreover, under a slightly stronger
hypothesis, these are the only such maximum families.

There are a large number of generalizations of Theorem~\ref{thm:EKR}.
We focus here on one broad family of such generalizations. Holroyd
and Johnson asked at the 1997 British Combinatorial Conference \cite{Holroyd:1999}
whether an analogue of Erd\H{o}s-Ko-Rado holds for independent sets
in cyclic graphs and their graph-theoretic powers. Talbot showed the
answer to be ``yes'' in the strong sense in a 2003 paper \cite{Talbot:2003}:
the largest intersecting families of $r$ element independent sets
are exactly the families of $r$ element independent sets having a
common intersection.

Holroyd and Talbot then asked whether (when $r$ is sufficiently small)
similar results hold for intersecting families of $r$ element independent
sets in other graphs \cite{Holroyd/Talbot:2005}. They found counterexamples
for $r$ around half the size of a maximum independent set, but not
for $r$ at most half of the minimal size of a maximal independent
set (the independent domination number). Borg later extended the question
of Holroyd and Talbot from independence complexes of graphs to arbitrary
simplicial complexes \cite{Borg:2009}, as follows.
\begin{conjecture}[Holroyd and Talbot \cite{Holroyd/Talbot:2005}; Borg \cite{Borg:2009}]
\label{conj:HolroydTalbotBorg} Let $\Delta$ be a simplicial complex
whose smallest facet has $d$ vertices, and let $r\leq d/2$. If $\mathcal{A}$
is a family of pairwise-intersecting faces of $\Delta$, each with
$r$ elements, then there is some vertex $v$ of $\Delta$ so that
$\left|\mathcal{A}\right|\leq f_{r-1}(\link_{\Delta}v)$.

If $\left|\mathcal{A}\right|$ achieves the upper bound and $r<d/2$,
then $\mathcal{A}$ consists of the $r$ element faces containing
some vertex $v$.
\end{conjecture}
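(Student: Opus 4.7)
The plan is to attack the conjecture through a shifting-and-near-cone reduction, which has been the standard route for Erd\H{o}s-Ko-Rado problems on simplicial complexes. First, I would fix a linear order on the vertices and apply a combinatorial shifting operation that replaces $\Delta$ by a shifted complex $\Delta'$ on the same ground set, having the same $f$-vector and such that the maximum cardinality of an intersecting family of $r$-faces does not decrease. Classical Frankl-style compression, performed one transposition at a time, is designed to have this property. This reduces the problem to proving the conjecture on shifted complexes.

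Next, every shifted complex is in particular a near-cone with apex $v$ equal to the smallest vertex. The natural candidate extremal family on $\Delta'$ is $\mathcal{S}_v = \{F \in \Delta' : v \in F,\ |F|=r\}$, which is intersecting and has exactly $f_{r-1}(\link_{\Delta'} v)$ members. To obtain the upper bound $|\mathcal{A}| \leq f_{r-1}(\link_{\Delta'} v)$ I would construct an injection $\mathcal{A} \setminus \mathcal{S}_v \hookrightarrow \mathcal{S}_v \setminus \mathcal{A}$ using the near-cone property: a face $F$ not containing $v$ admits a swap of some non-apex vertex for $v$, producing a face through $v$ that still pairwise intersects $\mathcal{A}$ but cannot lie in $\mathcal{A}$ itself.

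For the strict part, assume $|\mathcal{A}| = f_{r-1}(\link_{\Delta'} v)$ and $\mathcal{A} \neq \mathcal{S}_v$, and pick some $F \in \mathcal{A}$ with $v \notin F$. Every other member of $\mathcal{A}$ must meet $F$, so $\mathcal{A}$ is contained in the union over $u \in F$ of the stars of $u$. A double count, combined with $r < d/2$ and the fact that each facet has at least $d$ vertices, should produce $r$-faces through $v$ that meet every member of $\mathcal{A}$ but lie outside $\mathcal{A}$, giving room to enlarge $\mathcal{A}$ and contradicting maximality. One must also verify that strictness is not destroyed by the shifting operation, i.e., that equality lifts back from $\Delta'$ to $\Delta$.

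The principal obstacle, in my view, is the strict step: the bound itself is reasonably routine, but ruling out non-star extremal families demands careful control over how $\mathcal{A}$ overlaps $\mathcal{S}_v$. This is where a depth hypothesis on $\Delta$ must enter, as high Cohen-Macaulay connectivity guarantees enough combinatorial room inside $\link_{\Delta'} v$ to furnish the witnesses above. I expect that the full conjecture as stated requires ideas beyond this framework; the abstract confirms as much by proving the result precisely for near-cones of sufficiently high depth.
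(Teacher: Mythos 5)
This statement is a conjecture, not a theorem of the paper: the authors do not prove it in general, and it remains open. They prove it only for near-cones satisfying a depth condition (Theorem~\ref{thm:StrictEKR} and its corollaries). So your proposal cannot be compared to a proof in the paper; instead I will point to where it breaks.

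The fatal gap is in your very first reduction. Combinatorial shifting of the \emph{complex} does not preserve the quantities appearing in either the hypothesis or the conclusion of the conjecture. The paper gives an explicit example: the one-dimensional complex with facets $\{1,2\},\{2,3\},\{3,4\}$ has all facets of size $2$, but after $\shift_{1\leftarrow 4}$ it acquires the facet $\{4\}$ of size $1$. So shifting can shrink the minimum facet size $d$, destroying the hypothesis $r\leq d/2$; and it can change $f_{r-1}(\link_{\Delta}v)$ for every vertex, so a bound proved for the shifted complex $\Delta'$ in terms of $f_{r-1}(\link_{\Delta'}v)$ does not descend to a bound for $\Delta$ in terms of $f_{r-1}(\link_{\Delta}v)$. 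This is precisely why every theorem in the paper assumes $\Delta$ is already a near-cone (shifted with respect to an initial vertex or set of vertices) together with a depth hypothesis: under those assumptions Lemma~\ref{lem:AlgebraicShiftingFacts} guarantees that algebraic shifting preserves both the depth and the face numbers of the link and deletion of the apex, so the target quantity survives the reduction. Without a near-cone hypothesis on the original $\Delta$, no such control is available, and your reduction to shifted complexes is unjustified. Your subsequent steps (the swap injection into the star of the apex, and a Hilton--Milner-type count to rule out non-star extremal families) are broadly the strategy the paper uses in Section~\ref{sec:Strict-EKR} \emph{after} assuming the near-cone and depth hypotheses, but they cannot repair the first step. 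Your closing paragraph correctly diagnoses the situation --- the framework only yields the near-cone case --- but that means the proposal proves (at best) a special case, not the conjecture as stated.
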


For example, Theorem~\ref{thm:EKR} says exactly that the simplex
on $d=n$ vertices satisfies Conjecture~\ref{conj:HolroydTalbotBorg}.

If a simplicial complex $\Delta$ satisfies the upper bound of Conjecture~\ref{conj:HolroydTalbotBorg}
at a specified value of $r$, then we say that $\Delta$ is \emph{$r$-EKR}.
If every maximum intersecting family has a common intersection, then
we say that $\Delta$ is \emph{strictly $r$-EKR}. Thus, Conjecture~\ref{conj:HolroydTalbotBorg}
says that if the smallest facet of $\Delta$ has $d$ vertices, then
$\Delta$ is $r$-EKR for $r\leq d/2$, and strictly $r$-EKR for
$r<d/2$. We abuse terminology to say that a graph is (strictly) $r$-EKR
if its associated simplicial complex of independent sets has the same
property.

There has been considerable work on Conjecture~\ref{conj:HolroydTalbotBorg},
as recently surveyed in \cite{Hurlbert:2025UNP}. Borg showed in \cite{Borg:2009}
that if the smallest facet of $\Delta$ has $d$ vertices, then $\Delta$
is strictly $r$-EKR for $r$ on the order of $d^{1/3}$. A recent
preprint of Kupavskii \cite{Kupavskii:2023UNP} improves Borg's bound
from $d^{1/3}$ to within a subpolynomial factor of $d/2^{19}$. There
have been several papers \cite{Borg/Feghali:2022,Estrugo/Pastine:2021,Hilton/Holroyd/Spencer:2011,Hilton/Spencer:2009}
concerning disjoint unions of copies of powers of cyclic graphs, building
on the paper of Talbot \cite{Talbot:2003}. Olarte, Santos, Spreer,
and Stump considered Conjecture~\ref{conj:HolroydTalbotBorg} (along
with related questions) on flag manifolds \cite{Olarte/Santos/Spreer/Stump:2020}.

Hurlbert and Kamat showed in \cite{Hurlbert/Kamat:2011} that any
chordal graph with an isolated vertex satisfies the upper bound of
Conjecture~\ref{conj:HolroydTalbotBorg}. The second author showed
more generally in \cite{Woodroofe:2011a} that any near-cone satisfying
a depth condition satisfies the upper bound of Conjecture~\ref{conj:HolroydTalbotBorg}.
Here, a simplicial complex is a \emph{near-cone with apex vertex $a$}
if it is \emph{shifted with respect to $a$}; that is, if whenever
$B$ is a face and $w\in B$, also $(B\setminus w)\cup a$ is a face.
The independence complex of a graph $G$ is a cone if and only if
$G$ has an isolated vertex, and hence is also a near-cone in this
situation. The depth condition is more technical, but is well understood
on many complexes. For instance, independence complexes of chordal
graphs are vertex-decomposable \cite{Dochtermann/Engstrom:2009,Morey/Villarreal:2012,Woodroofe:2009a},
and it follows that the depth agrees with the dimension of a minimum
facet. There is a huge library of examples in the literature where
the depth is known, see e.g. \cite{Bjorner:1995,Herzog/Hibi:2011}.

Neither the paper of Hurlbert and Kamat \cite{Hurlbert/Kamat:2011}
nor that of the second author \cite{Woodroofe:2011a} addressed the
strict $r$-EKR property.

In the current paper, we prove four main theorems. The first and most
substantial theorem fills in the strict EKR gap of \cite{Hurlbert/Kamat:2011}
and \cite{Woodroofe:2011a}. We show:
\begin{thm}
\label{thm:StrictEKR}Let $\Delta$ be a simplicial complex of depth
$d-1$, and let $r<d/2$. If $\Delta$ is shifted with respect to
the vertex $a$, then $\Delta$ is strictly $r$-EKR.
\end{thm}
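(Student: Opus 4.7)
My plan is to prove Theorem~\ref{thm:StrictEKR} by a compression argument toward the apex $a$. Let $\mathcal{A}$ be any maximum intersecting family of $r$-element faces of $\Delta$. The result of \cite{Woodroofe:2011a} cited above applies under the present hypotheses to give $|\mathcal{A}| = f_{r-1}(\link_\Delta a)$, so the task is to show that the faces of $\mathcal{A}$ share a common vertex.

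For each vertex $v \neq a$, I would define the $(v,a)$-shift: replace each $B \in \mathcal{A}$ with $v \in B$ and $a \notin B$ by $(B \setminus v) \cup a$ whenever the latter is not already in $\mathcal{A}$. Because $\Delta$ is shifted with respect to $a$, the replacement lies in $\Delta$; the standard Frankl-style pairing argument shows that this operation preserves both the size and the intersecting property. Iterating until stable yields a family $\mathcal{A}^{*}$ of the same size as $\mathcal{A}$, still intersecting, and shifted toward $a$ in the sense that whenever $B \in \mathcal{A}^{*}$ contains $v$ but not $a$, the face $(B \setminus v) \cup a$ also lies in $\mathcal{A}^{*}$.

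The central step is to show that $\mathcal{A}^{*}$ is contained in the star of $a$. Suppose for contradiction that some $B \in \mathcal{A}^{*}$ omits $a$. The shifted-toward-$a$ property forces $(B \setminus v) \cup a \in \mathcal{A}^{*}$ for each of the $r$ vertices $v \in B$. On the other hand, the intersecting property excludes from $\mathcal{A}^{*}$ every $r$-face of the form $\{a\} \cup F$ with $F \in \link_\Delta a$ and $F \cap B = \emptyset$. I would then bound the number of such excluded faces from below, using the depth condition to guarantee a rich supply of $(r-1)$-faces of $\link_\Delta a$ disjoint from $B$; the strict inequality $r < d/2$ should ensure that these outnumber the faces $\mathcal{A}^{*}$ can afford to omit from the star without falling below the size bound $f_{r-1}(\link_\Delta a)$. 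This counting step, which tightly uses the depth, is the main technical obstacle I anticipate.

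Having established that $\mathcal{A}^{*}$ equals the full set of $r$-faces in the star of $a$, it remains to conclude that the original $\mathcal{A}$ has a common vertex. If $\mathcal{A} \neq \mathcal{A}^{*}$, I would trace the shifts backward: each nontrivial shift removed a face $B$ not containing $a$ and inserted $(B \setminus v) \cup a$. Using the intersecting property to constrain the allowable $v$ at each step, together with the fact that $\mathcal{A}$ is already of extremal size, a Hilton-Milner-style rigidity argument should identify a vertex (possibly distinct from $a$) lying in every face of $\mathcal{A}$.
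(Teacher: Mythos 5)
Your opening move (combinatorial shifting toward the apex $a$, which is legal because $\Delta$ is shifted with respect to $a$) matches the paper's, but there are two genuine gaps, and each is located exactly where the real difficulty of the theorem lives. First, your ``central step'' counting argument is not a technical obstacle to be filled in later --- it is essentially the whole theorem. Once some $B\in\mathcal{A}^{*}$ omits $a$, the stabilized family spans a simplex boundary on $B\cup a$ and hence has no common intersection, and you must show that such a family of $r$-faces of a depth-$(d-1)$ complex has size strictly below $f_{r-1}(\link_{\Delta}a)$. Your family is only shifted \emph{with respect to $a$}, not fully shifted, and depth alone gives you no direct handle on ``the number of $(r-1)$-faces of $\link_{\Delta}a$ disjoint from $B$'' versus the number of non-star faces the family retains. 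The paper resolves this by \emph{algebraically} shifting at this point: the key facts are that algebraic shifting preserves depth (Lemma~\ref{lem:AlgebraicShiftingFacts}) and, crucially, preserves the property of spanning a simplex boundary (Corollary~\ref{cor:ShiftedNontrivial}, via preservation of homology), so one lands in a fully shifted complex with facet depth where a Hilton--Milner-type bound (Borg's theorem, or Theorem~\ref{thm:SimplicialHM} via Theorem~\ref{thm:CrossIntShadow}) applies. Your proposal never invokes algebraic shifting, and without it I do not see how the count closes.

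Second, your final ``trace the shifts backward'' step is the classical stumbling block of strictness proofs via compression: knowing that the fully stabilized family is the star of $a$ does not tell you that the original family was a star, and shifting can silently create a common intersection that the original family lacked. The paper avoids this entirely by arguing the contrapositive and by \emph{stopping the shifting process the moment a single operation $\shift_{a\leftarrow v}$ would create a common intersection}. In that case every face of the current family contains $a$ or $v$, and an explicit injection $\varphi$ into $F_{r-1}(\link_{\Delta}a)$ is shown to be non-surjective by applying the classical Hilton--Milner cross-intersecting bound (Theorem~\ref{thm:HiltonMilner}) to two Sperner families supported on a large facet $T$ (this is where $|T|\geq d>2r$ enters). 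Your sketch of ``a Hilton--Milner-style rigidity argument should identify a vertex'' gestures at the right tool but in the wrong place and without a concrete mechanism; as written, the backward-tracing step would not go through.
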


See Section~\ref{sec:Background} for the definition of and background
on depth. The depth of $\Delta$ is at most the minimum facet dimension
of $\Delta$, and there are examples where this inequality is strict.
The depth and minimum facet dimension agree for shifted, vertex-decomposable,
shellable, or more generally sequentially Cohen-Macaulay simplicial
complexes.
\begin{cor}
Sequentially Cohen-Macaulay near-cones satisfy Conjecture~\ref{conj:HolroydTalbotBorg},
including the strict EKR statement.
\end{cor}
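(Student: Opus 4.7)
The plan is to derive this as a direct consequence of Theorem~\ref{thm:StrictEKR}, combined with the non-strict EKR upper bound of \cite{Woodroofe:2011a}, once we have identified the depth of the complex.

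First, I would observe that the shiftedness hypothesis of Theorem~\ref{thm:StrictEKR} is automatic: a near-cone with apex vertex $a$ is, by definition, shifted with respect to $a$. Next, writing $d$ for the number of vertices in the smallest facet of $\Delta$, I would invoke the standard fact (recorded immediately after Theorem~\ref{thm:StrictEKR}) that a sequentially Cohen-Macaulay simplicial complex has depth equal to its minimum facet dimension. This gives $\depth \Delta = d-1$, so the hypothesis of Theorem~\ref{thm:StrictEKR} is met with exactly this value of $d$.

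With these ingredients in place, Theorem~\ref{thm:StrictEKR} immediately yields that $\Delta$ is strictly $r$-EKR for every $r < d/2$, which is precisely the strict EKR statement of Conjecture~\ref{conj:HolroydTalbotBorg} in this range. The only remaining case is the non-strict upper bound at $r = d/2$ (which arises when $d$ is even); this follows from the main result of \cite{Woodroofe:2011a}, which establishes the EKR upper bound of Conjecture~\ref{conj:HolroydTalbotBorg} for any near-cone satisfying exactly the depth condition $\depth \Delta \geq d-1$. I do not anticipate any substantive obstacle: the real content of the corollary lies in Theorem~\ref{thm:StrictEKR}, and this deduction amounts to a depth identification followed by the application of two existing theorems.
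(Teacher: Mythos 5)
Your proposal is correct and matches the paper's (implicit) derivation: identify the depth of a sequentially Cohen-Macaulay complex with its minimum facet dimension, apply Theorem~\ref{thm:StrictEKR} for the strict statement when $r<d/2$, and fall back on the upper bound of \cite{Woodroofe:2011a} for the non-strict case $r\leq d/2$. The observation that the shiftedness hypothesis is automatic for a near-cone is exactly right, since the paper defines a near-cone with apex $a$ as a complex shifted with respect to $a$.
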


The other three theorems from the paper will be a constellation of
results around the theme that if a complex is shifted with respect
to an initial set of vertices and also has sufficiently high depth,
then that complex is nearly as nice as a shifted complex.

Our second main theorem will be a stability result. The maximum families
of Theorem~\ref{thm:EKR} are \emph{stable}, in the sense that any
family that does not have a common intersection is relatively small.
Indeed, Hilton and Milner \cite{Hilton/Milner:1967} improved Theorem~\ref{thm:EKR}
to show that if $\mathcal{A}$ as in the hypothesis of that theorem
does not have a common intersection, then 
\[
\left|\mathcal{A}\right|\leq{n-1 \choose r-1}-{n-r-1 \choose r-1}+1.
\]
 The bound of Hilton and Milner is achieved by the family of subsets
consisting of $A=\{2,\dots,r+1\}$ together with all $r$ element
subsets that contain $1$ and intersect $A$ (i.e., that contain $1$
and are not contained in $A^{c}$). Borg extended the stability result
of Hilton-Milner to families of $r$ element faces of a shifted simplicial
complex \cite[Theorem 1.5]{Borg:2013}.

Our stability result will require somewhat stronger conditions on
the simplicial complex than Theorem~\ref{thm:StrictEKR}, although
still significantly weaker than the shiftedness required in \cite{Borg:2013}.
\begin{thm}
\label{thm:SimplicialHM}Let $\Delta$ be a simplicial complex of
depth $d-1$, and let $2\leq r\leq d/2$. Let $\mathcal{A}$ be an
intersecting family consisting of $r$ element faces of $\Delta$.

If $\Delta$ is shifted with respect to $\{v_{1}<\cdots<v_{r+1}\}$
and $\mathcal{A}$ has no common intersection, then 
\[
\left|\mathcal{A}\right|\leq f_{r-1}(\link_{\Delta}v_{1})-\beta+1,
\]
where $\beta$ is the number of $r$ element faces containing $v_{1}$
but no element of $\{v_{2},\dots,v_{r+1}\}$.
\end{thm}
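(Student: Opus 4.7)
The plan is to adapt the classical Hilton-Milner argument to our simplicial setting, using the shiftedness of $\Delta$ with respect to $v_1, \ldots, v_{r+1}$ together with the depth hypothesis to substitute for the role played by the full symmetric group in the classical proof.

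First I would apply the compression operations $\shift_{v_i}$ to $\mathcal{A}$ for $i = 1, \ldots, r+1$, iterating until they stabilize. Each shift preserves $|\mathcal{A}|$ and the intersecting property, and the shiftedness of $\Delta$ with respect to each $v_i$ ensures that $\mathcal{A}$ remains inside $\Delta$. A subtle point is that shifting might introduce a common intersection; using the shifted structure, one can verify that any such common intersection must be $\{v_1\}$ and that this forces the original family to have $v_1$ as a common element, contradicting the hypothesis. So after stabilization we may assume $\mathcal{A}$ is itself shifted with respect to $v_1, \ldots, v_{r+1}$ and still has no common intersection.

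Since $\mathcal{A}$ has no common intersection, some face $A \in \mathcal{A}$ misses $v_1$. Iteratively shifting the elements of $A$ to $v_2, \ldots, v_{r+1}$ (and using that $\{v_2, \ldots, v_{r+1}\}$ is a face of $\Delta$ because each $v_i$ is an apex), we conclude $B := \{v_2, \ldots, v_{r+1}\} \in \mathcal{A}$. Consequently every face of $\mathcal{A}$ intersects $B$. Partitioning $\mathcal{A} = \mathcal{A}_1 \cup \mathcal{A}_{\bar{1}}$ by containment of $v_1$, we immediately obtain $|\mathcal{A}_1| \leq f_{r-1}(\link_\Delta v_1) - \beta$, since every element of $\mathcal{A}_1$ is an $r$-face containing $v_1$ and meeting $B$.

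For the remaining bound, I would construct an injection $\phi \colon \mathcal{A}_{\bar{1}} \setminus \{B\} \to \mathcal{H}$, where $\mathcal{H}$ is the set of $r$-faces $D \in \Delta$ with $v_1 \in D$, $D \cap B \neq \emptyset$, and $D \notin \mathcal{A}$. For each $C \in \mathcal{A}_{\bar{1}} \setminus \{B\}$, the intersecting property of $\mathcal{A}$ guarantees that any $r$-face of $\Delta$ disjoint from $C$ lies outside $\mathcal{A}$; combined with the shiftedness and depth of $\Delta$, this lets us select a canonical $\phi(C) \in \mathcal{H}$ containing $v_1$ and some element of $B \setminus C$. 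The injection yields $|\mathcal{A}_{\bar{1}}| - 1 \leq |\mathcal{H}| = f_{r-1}(\link_\Delta v_1) - \beta - |\mathcal{A}_1|$, which combined with the bound on $|\mathcal{A}_1|$ completes the proof. The main obstacle is defining $\phi$ so that distinct $C$'s yield distinct $\phi(C)$'s; an alternative route would apply Theorem~\ref{thm:StrictEKR} to the deletion $\del_\Delta v_1$ (which inherits shiftedness with respect to $v_2, \ldots, v_{r+1}$ and whose depth drops by at most one) to structurally bound $|\mathcal{A}_{\bar{1}}|$, with the borderline case $r = d/2$ requiring careful separate treatment.
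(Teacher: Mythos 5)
There is a genuine gap at the very first step, and it is exactly the difficulty the paper's proof is built to overcome. You assert that if a compression $\shift_{v_1\leftarrow w}$ introduces a common intersection, then the original family must already have had $v_1$ as a common element. This is false. Take $r=3$ and $\mathcal{A}=\bigl\{\{v_1,a,b\},\{w,a,c\},\{v_1,w,d\}\bigr\}$: this is intersecting with no common element, yet $\shift_{v_1\leftarrow w}$ sends $\{w,a,c\}$ to $\{v_1,a,c\}$ and produces a star at $v_1$. Families in which every member meets a fixed pair $\{v_1,w\}$ (the Hilton--Milner-type configurations, which are precisely the near-extremal examples for this theorem) generically collapse to a star under one compression. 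So you cannot assume the stabilized family is shifted and still has no common intersection; the case where some compression creates a star must be handled separately. The paper does this by stopping just before the offending shift, observing that every set then contains $v_1$ or $w$, and performing further compressions toward $v_2,\dots,v_{r+1}$ until the family provably contains all $r$-subsets of an $(r+1)$-set (``spans a simplex boundary''), a property that survives algebraic shifting by homology preservation (Corollary~\ref{cor:ShiftedNontrivial}). Only then is the ``no common intersection'' hypothesis safely transported to the shifted setting.

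The second gap is the endgame. Your injection $\phi:\mathcal{A}_{\bar 1}\setminus\{B\}\to\mathcal{H}$ is the entire content of the Hilton--Milner counting step, and you acknowledge you do not know how to define it injectively; in a general partially shifted complex of given depth there is no obvious canonical choice of $\phi(C)$. Your fallback of applying Theorem~\ref{thm:StrictEKR} to $\del_\Delta v_1$ does not give the right kind of bound either: that theorem bounds $|\mathcal{A}_{\bar 1}|$ by a link face count in the deletion, not by the number of unused $r$-faces through $v_1$ meeting $B$, which is what your inequality requires. The paper avoids this entirely by splitting $\mathcal{A}''$ into $\mathcal{C}=\{A\setminus v_1: v_1\in A\}$ and $\mathcal{D}=\{A: v_1\notin A\}$ inside $\link_\Delta v_1$, checking $\bdry\mathcal{D}\subseteq\mathcal{C}$ by shiftedness, and invoking the cross-intersecting shadow theorem (Theorem~\ref{thm:CrossIntShadow}), which is proved independently by induction. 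Without a replacement for that step, your argument does not close.
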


For example, when $\Delta$ is the simplex on $n$ vertices, Theorem~\ref{thm:SimplicialHM}
recovers the bound of Hilton and Milner. The bound of Theorem~\ref{thm:SimplicialHM}
is best-possible, as it is attained by $A=\{v_{2},\dots,v_{r+1}\}$
together with all $r$ element faces that contain $v_{1}$ and intersect
$A$.

Our final main theorems concerns cross-intersecting systems of faces.
Systems $\mathcal{A},\mathcal{B}$ of sets are \emph{cross-intersecting}
if for every $A\in\mathcal{A}$ and $B\in\mathcal{B}$, it holds that
$A\cap B\neq\emptyset$. Hilton and Milner in their paper \cite[Theorem 2]{Hilton/Milner:1967}
gave an upper bound on $\left|\mathcal{A}\right|+\left|\mathcal{B}\right|$
for nonempty cross-intersecting families $\mathcal{A},\mathcal{B}$.
In particular, if $\mathcal{A}$ and $\mathcal{B}$ consist of $r$
element subsets of $\{1,\dots,n\}$, where $r\leq n/2$, then the
maximum is achieved by taking $\mathcal{B}$ to consist of a single
set, and $\mathcal{A}$ to be all sets that intersect nontrivially
with that set. Borg extended the bound of Hilton and Milner to cross-intersecting
families of faces from a shifted simplicial complex \cite[Theorem 1.2]{Borg:2010}.
We further extend Borg's result, as follows.
\begin{thm}
\label{thm:CrossIntClassic}Let $\Delta$ be a simplicial complex
of depth $d-1$, and let $2\leq r\leq d/2$. Let $\mathcal{A},\mathcal{B}$
be nonempty cross-intersecting families consisting of $r$ element
faces of $\Delta$.

If $\Delta$ is shifted with respect to $\{v_{1}<\cdots<v_{r}\}$,
then 
\[
\left|\mathcal{A}\right|+\left|\mathcal{B}\right|\leq f_{r}(\Delta)-\gamma+1,
\]
where $\gamma$ is the number of $r$ element faces containing no
element of $\{v_{1},\dots,v_{r}\}$.
\end{thm}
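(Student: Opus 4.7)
The plan is to extend the shifting and depth methodology used in the proofs of Theorems~\ref{thm:StrictEKR} and~\ref{thm:SimplicialHM}, now in the cross-intersecting setting, combining simultaneous compression of the pair $(\mathcal{A}, \mathcal{B})$ with induction on $r$ via the link at $v_1$.

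First, I would perform simultaneous combinatorial shifts. For each $i \in \{1,\ldots,r\}$ and each vertex $w > v_i$, apply $\shift_{v_i,w}$ in parallel to $\mathcal{A}$ and $\mathcal{B}$. A classical lemma shows that simultaneous shifting preserves both cross-intersection and the cardinalities $|\mathcal{A}|, |\mathcal{B}|$; since $\Delta$ is shifted with respect to each $v_i$, the resulting families stay inside $\Delta$. Iterating to a fixed point, I may assume that both $\mathcal{A}$ and $\mathcal{B}$ are themselves shifted with respect to $\{v_1,\ldots,v_r\}$.

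Next, split each family by whether it contains $v_1$, writing $\mathcal{A} = \mathcal{A}_+ \sqcup \mathcal{A}_-$ and $\mathcal{B} = \mathcal{B}_+ \sqcup \mathcal{B}_-$. Removing $v_1$ turns $\mathcal{A}_+, \mathcal{B}_+$ into cross-intersecting families of $(r-1)$-element faces of $\link_\Delta v_1$, a complex of depth at least $d-2$ that is shifted with respect to $\{v_2,\ldots,v_r\}$; this sets up an induction on $r$. For $\mathcal{A}_-, \mathcal{B}_-$, a shifted-structure argument (the cross-intersecting analogue of the reduction underlying Theorem~\ref{thm:SimplicialHM}) should let me reduce to the case $\mathcal{B}_- = \emptyset$ without decreasing $|\mathcal{A}| + |\mathcal{B}|$. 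Once $\mathcal{B} = \mathcal{B}_+$ is entirely inside the star of $v_1$, every $A \in \mathcal{A}_-$ must meet every $B \in \mathcal{B}$ at a vertex $\neq v_1$; chasing this constraint together with the depth-based counts that drive Theorem~\ref{thm:StrictEKR} should produce the desired inequality $f_r(\Delta) - \gamma + 1$, with equality attained at the extremal pair $\mathcal{B} = \{\{v_1,\ldots,v_r\}\}$ and $\mathcal{A} = \{A \in \Delta : |A|=r,\ A \cap \{v_1,\ldots,v_r\} \neq \emptyset\}$.

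The main obstacle is the reduction step forcing $\mathcal{B}_- = \emptyset$: shifting a set of $\mathcal{B}_-$ into $\mathcal{B}_+$ may collide with existing members of $\mathcal{B}_+$, and this is where both the depth condition on $\Delta$ and the shiftedness with respect to the entire set $\{v_1,\ldots,v_r\}$ (rather than only $\{v_1\}$) must be used to guarantee that enough room is available. Alongside, careful tracking of depth under the link, where depth drops by at most one under shiftedness with respect to the vertex being linked, is essential for the induction and for the base case.
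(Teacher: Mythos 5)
Your proposal has a genuine gap, and in fact the central step of your induction is false as stated. You claim that removing $v_1$ turns $\mathcal{A}_+,\mathcal{B}_+$ into cross-intersecting families of $(r-1)$-element faces of $\link_{\Delta}v_1$. This fails: two sets $A\in\mathcal{A}_+$ and $B\in\mathcal{B}_+$ may satisfy $A\cap B=\{v_1\}$, in which case $A\setminus v_1$ and $B\setminus v_1$ are disjoint. Shiftedness cannot repair this at the \emph{first} vertex, because the shifting exchange only replaces a vertex by a \emph{smaller} one, and $v_1$ is minimal. (This is exactly why the standard compression induction for cross-intersecting families peels off the \emph{last} vertex $v_n$: if $A\cap B=\{v_n\}$ and the sets are small, shiftedness produces $A\setminus v_n\cup v_i\in\mathcal{A}$ disjoint from $B$, a contradiction; hence the families of sets through $v_n$, with $v_n$ deleted, remain cross-intersecting --- this is the paper's Lemma~\ref{lem:CrossIntersectingAtVn}.) Your other key step, the reduction to $\mathcal{B}_-=\emptyset$, is the part you yourself flag as the main obstacle and for which you give no argument; it is not a routine compression, since pushing sets of $\mathcal{B}_-$ into the star of $v_1$ can collide with members of $\mathcal{B}_+$ and can destroy cross-intersection with $\mathcal{A}_-$.

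There is also a structural mismatch with the hypotheses. Your combinatorial shifting only makes $\mathcal{A}$ and $\mathcal{B}$ shifted with respect to the initial segment $\{v_1,\dots,v_r\}$, which is too weak to support an exchange argument at an arbitrary vertex, and it never brings the depth hypothesis into play. The paper instead applies \emph{algebraic} shifting once: this preserves cross-intersection, preserves depth (so after shifting every facet has at least $d$ vertices), and --- using the hypothesis that $\Delta$ is shifted with respect to $\{v_1,\dots,v_r\}$ together with Lemma~\ref{lem:AlgebraicShiftingFacts}~(\ref{enu:AlgShiftingNC}) --- preserves the quantity $f_r(\Delta)-\gamma$. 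One then inducts on the number of vertices via $\del_{\Delta}v_n$ and $\link_{\Delta}v_n$, handling the cases where $\mathcal{A}(n)$ or $\mathcal{B}(n)$ is empty by noting that $\{v_1,\dots,v_r\}$ lies in the other family and so forces every remaining set to meet $\{v_1,\dots,v_r\}$. To repair your write-up you would need to replace the split at $v_1$ by the split at $v_n$ after a genuine full shifting step, at which point you are reconstructing the paper's argument.
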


The message from Theorems~\ref{thm:SimplicialHM} and \ref{thm:CrossIntClassic}
is that the property of being shifted may frequently be weakened to
being shifted with respect to a small initial set of vertices, combined
with having high depth.

In a similar manner, we also extend a cross-intersecting theorem of
the authors \cite{Bulavka/Woodroofe:2024UNP} to the simplicial complex
setting. The \emph{shadow} of a system $\mathcal{B}$ of $r$ element
sets is the system $\bdry\mathcal{B}$ consisting of all $(r-1)$
element sets that are contained in at least one set of $\mathcal{B}$.
\begin{thm}
\label{thm:CrossIntShadow}Let $\Delta$ be a simplicial complex of
depth $d-1$, and let $2\leq r\leq(d+1)/2$. Let $\mathcal{A},\mathcal{B}$
be nonempty cross-intersecting families consisting respectively of
$r-1$ element faces and $r$ element faces of $\Delta$; and further
suppose that $\bdry\mathcal{B}\subseteq\mathcal{A}$.

If $\Delta$ is shifted with respect to $\{v_{1}<\cdots<v_{r}\}$,
then 
\[
\left|\mathcal{A}\right|+\left|\mathcal{B}\right|\leq f_{r-1}(\Delta)-\gamma+1,
\]
where $\gamma$ is the number of $r-1$ element faces containing no
element of $\{v_{1},\dots,v_{r}\}$.
\end{thm}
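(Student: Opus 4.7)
The plan is to follow the template used for the other simplicial-complex extensions in the paper: apply a compression that is available because $\Delta$ is shifted with respect to the initial vertices $\{v_1,\ldots,v_r\}$, and then reduce to the combinatorial cross-intersecting shadow inequality from \cite{Bulavka/Woodroofe:2024UNP}. The slightly more generous hypothesis $r\leq(d+1)/2$ compared to Theorem~\ref{thm:CrossIntClassic} should correspond exactly to the extra flexibility the shadow condition gives: once $\mathcal{A}$ contains $\bdry\mathcal{B}$, one can trade cross-intersection constraints for shadow constraints when setting up the compression. The target extremal example is $\mathcal{B}=\{\{v_1,\ldots,v_r\}\}$ together with $\mathcal{A}$ equal to all $(r-1)$-faces of $\Delta$ meeting $\{v_1,\ldots,v_r\}$; the shadow $\bdry\mathcal{B}$ consists of $(r-1)$-subsets of $\{v_1,\ldots,v_r\}$, all of which lie in $\mathcal{A}$, and $|\mathcal{A}|+|\mathcal{B}|$ equals the claimed bound.

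First I would introduce the Kleitman--Frankl shift $\shift_{v_i}$ toward each $v_i$: a face $F$ containing some $w\notin\{v_1,\ldots,v_r\}$ and missing $v_i$ is replaced by $(F\setminus w)\cup v_i$ whenever the replacement is not already present. Because $\Delta$ is shifted with respect to $v_i$, every such replacement lies in $\Delta$, so the shifted families stay inside $\Delta$. Iterating $\shift_{v_1},\ldots,\shift_{v_r}$ on $\mathcal{A}$ and on $\mathcal{B}$ simultaneously until stable, one must verify three preservation properties: (a) $|\mathcal{A}|$ and $|\mathcal{B}|$ do not change; (b) cross-intersection is maintained; and (c) the inclusion $\bdry\mathcal{B}\subseteq\mathcal{A}$ is maintained. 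After this reduction, any $A\in\mathcal{A}$ disjoint from $\{v_1,\ldots,v_r\}$ forces all of its $v_i$-swaps into $\mathcal{A}$, and then the cross-intersection and shadow conditions, together with the depth hypothesis producing enough faces in $\link_\Delta v_i$, should collapse the configuration into a structure that is bounded against the extremal example. This concentrates $\mathcal{A}$ on the $f_{r-1}(\Delta)-\gamma$ faces meeting $\{v_1,\ldots,v_r\}$.

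The final step is to apply the combinatorial cross-intersecting shadow inequality of \cite{Bulavka/Woodroofe:2024UNP} to the pair $(\mathcal{A},\mathcal{B})$ indexed on $\{v_1,\ldots,v_r\}$ together with the supporting vertices provided by the depth hypothesis. This turns the counting of faces meeting $\{v_1,\ldots,v_r\}$ into the bound $f_{r-1}(\Delta)-\gamma+1$, where the extra $+1$ is precisely the combinatorial slack (it is the ``$\mathcal{B}=\{\{v_1,\ldots,v_r\}\}$'' contribution).

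The main obstacle will be preservation property~(c): if $B\in\mathcal{B}$ is replaced by $B'=(B\setminus w)\cup v_i$, then $\bdry B'$ contains new $(r-1)$-subsets which were not in $\bdry\mathcal{B}$, and the corresponding shifts in $\mathcal{A}$ must be executed in a carefully matched order so that membership of $\bdry\mathcal{B}$ in $\mathcal{A}$ is never broken. In cross-intersecting problems without a shadow hypothesis the analogous verification is essentially automatic, but here it requires one to shift $\mathcal{A}$ in tandem with $\mathcal{B}$ and to handle the (potentially contradictory) case in which a swap would simultaneously force an element out of $\mathcal{A}$ and into $\bdry\mathcal{B}$. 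The depth hypothesis $r\leq(d+1)/2$ should be used precisely at this point to produce an auxiliary face in $\Delta$ that absorbs the conflict, much as the depth hypothesis is used in the proof of Theorem~\ref{thm:StrictEKR}.
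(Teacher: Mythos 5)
Your proposal does not reach a proof, and it diverges from the paper's argument at the two places where the real work happens. First, the reduction: you compress only the families $\mathcal{A},\mathcal{B}$ combinatorially toward $\{v_1,\dots,v_r\}$, leaving $\Delta$ only partially shifted, and you correctly identify that preserving $\bdry\mathcal{B}\subseteq\mathcal{A}$ under such compressions is problematic --- but you then resolve it only by asserting that the depth hypothesis ``should'' produce an auxiliary face absorbing the conflict, which is not an argument. The paper avoids this difficulty entirely by using \emph{algebraic} shifting: by Lemma~\ref{lem:AlgebraicShiftingFacts}, parts (\ref{enu:AlgShiftLastKalai}) and (\ref{enu:AlgShiftSimplicial}), one has $\bdry\shiftext\mathcal{B}\subseteq\shiftext\bdry\mathcal{B}\subseteq\shiftext\mathcal{A}$, so the shadow condition survives for free; cross-intersection survives by part (\ref{enu:AlgShiftIntersect}); depth survives by part (\ref{enu:AlgShiftingDepthDim}); and the quantity $f_{r-1}(\Delta)-\gamma+1$ survives by iterating part (\ref{enu:AlgShiftingNC}) using the partial shiftedness of $\Delta$. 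Crucially, algebraic shifting also replaces $\Delta$ itself by a fully shifted complex whose minimum facet has at least $d$ vertices, which your compression-only reduction never achieves.

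Second, the endgame: you propose to invoke ``the combinatorial cross-intersecting shadow inequality of \cite{Bulavka/Woodroofe:2024UNP}'' as a black box. That result lives on the complete $r$-uniform hypergraph; it cannot by itself produce a bound of the form $f_{r-1}(\Delta)-\gamma+1$, which depends on the face numbers of an arbitrary (shifted) complex $\Delta$. The paper instead proves the shifted case directly by induction on the number of vertices, decomposing along the last vertex $v_n$ into $\mathcal{A}(n),\mathcal{A}(\neg n)$ and $\mathcal{B}(n),\mathcal{B}(\neg n)$, using Lemma~\ref{lem:CrossIntersectingAtVn} to keep the link-side families cross-intersecting, using the shadow hypothesis to handle the case $\mathcal{B}(n)=\emptyset$ (forced when $\mathcal{A}(n)=\emptyset$), and bookkeeping $\gamma=\gamma_r(n)+\gamma_r(\neg n)$ across the link and deletion. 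None of this inductive structure, nor any substitute for it, appears in your proposal, so the bound is never actually derived.
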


\subsection{\label{subsec:MainStrat}Main strategy}

To prove Theorems~\ref{thm:StrictEKR} and \ref{thm:SimplicialHM},
we combine combinatorial and algebraic shifting. A main difficulty
in dealing with intersection theorems in simplicial complexes via
shifting techniques is that one typically must simultaneously shift
both the simplicial complex and the intersecting family. The algebraic
shifting technique used in \cite{Fakhari:2016,Woodroofe:2011a} does
this simultaneous shifting (in one step), but is in several respects
more difficult to control precisely than the multistep combinatorial
shifting process.

Our technique here is to begin with some combinatorial shifting operations
in order to get better control over the algebraic shifting operation.
That is, we perform certain permitted combinatorial shifting operations
to simplify our intersecting family to something that is easy to control
under algebraic shifting, and then algebraically shift. The combination
will allow us to preserve the property of having no common intersection,
while requiring fewer hypotheses than combinatorial shifting by itself
would require. The crucial fact is that if a set family contains all
$r$ element subsets of an $(r+1)$ element set, then its algebraic
shift has the same property.

\subsection{Applications}

The depth condition in the hypotheses of our theorems may seem rather
arcane to the unfamiliar reader. Arcane or not, it is essential to
our arguments: depth is well-behaved under algebraic shifting, an
operation that is difficult to control directly.

Depth is well-studied in the combinatorial commutative algebra and
algebraic combinatorics literature, along with the related conditions
of (sequentially) Cohen-Macaulay and the equivalent (up to a duality)
invariants of projective dimension and Castelnuovo-Mumford regularity.
From this literature, we get a huge library of examples and applications
of our main theorems. We give a few gems from this library here, and
will go into more details about depth in Section~\ref{sec:Background},
particularly in \ref{subsec:VDcomplexes}.

Recall that a graph is \emph{chordal} if every induced cycle has length
$3$. As usual, we abuse terminology to say that a graph satisfies
an intersection condition, while meaning that its independence complex
satisfies the same condition.
\begin{cor}
Let $G$ be a chordal graph, or more generally a graph so that every
induced cycle has length $3$ or $5$. (See \cite{Woodroofe:2009a}.)
\begin{enumerate}
\item If $G$ has an isolated vertex, then it satisfies Conjecture~\ref{conj:HolroydTalbotBorg}.
\item If $G$ has at least $r$ isolated vertices, then it satisfies the
cross-intersecting conditions of Theorems~\ref{thm:CrossIntClassic}
and \ref{thm:CrossIntShadow}.
\item If $G$ has at least $r+1$ isolated vertices, then it satisfies the
stability condition of Theorem~\ref{thm:SimplicialHM}.
\end{enumerate}
\end{cor}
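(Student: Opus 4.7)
The plan is to verify the hypotheses of Theorems~\ref{thm:StrictEKR}--\ref{thm:CrossIntShadow} for $\Delta=\mathrm{Ind}(G)$, and then invoke each theorem directly. Only two verifications are required: the depth condition, and the shiftedness with respect to some initial set of vertices.

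For the depth condition, the cited references (and \cite{Woodroofe:2009a} for the case of induced cycles of length 3 or 5) show that $\Delta$ is vertex-decomposable, hence sequentially Cohen-Macaulay. As discussed after Theorem~\ref{thm:StrictEKR}, sequential Cohen-Macaulayness forces $\depth(\Delta)$ to equal the minimum facet dimension of $\Delta$, i.e.\ $\depth(\Delta)=d-1$ where $d$ is the minimum cardinality of a maximal independent set of $G$.

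For the shiftedness condition, if $v$ is an isolated vertex of $G$ then for every face $B\in\Delta$ and every $w\in B$ the set $(B\setminus\{w\})\cup\{v\}$ is again independent, since $v$ has no neighbours. Thus every isolated vertex of $G$ is an apex vertex of $\Delta$ in the sense of Section~\ref{sec:Introduction}, and one may order a given collection of $k$ isolated vertices $v_{1}<\cdots<v_{k}$ first so that $\Delta$ is shifted with respect to $\{v_{1}<\cdots<v_{k}\}$ (the shifting inequality holds for each $v_{i}$ for the strongest possible reason, namely that $v_{i}$ is universally exchangeable).

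With both conditions in place, the three parts follow: part (1) is immediate from the corollary to Theorem~\ref{thm:StrictEKR}, since one isolated vertex makes $\Delta$ a sequentially Cohen-Macaulay near-cone; part (2) applies Theorems~\ref{thm:CrossIntClassic} and \ref{thm:CrossIntShadow} using the shift with respect to $r$ isolated vertices of $G$; and part (3) applies Theorem~\ref{thm:SimplicialHM} using the shift with respect to $r+1$ isolated vertices. There is no real obstacle, since the content is simply the translation of ``isolated vertex'' and ``chordal (or $\{3,5\}$-cycle)'' into the shiftedness and depth hypotheses of the main theorems; one need only remark in passing that the conditions $r<d/2$, $r\le d/2$, or $r\le(d+1)/2$ appearing in those theorems are inherited verbatim.
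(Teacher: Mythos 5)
Your proposal is correct and matches the argument the paper intends (the corollary is stated without an explicit proof, but the surrounding discussion of vertex-decomposability of chordal independence complexes, Proposition~\ref{prop:VDhasFacetDepth}, and the observation that an independence complex is a $t$-fold cone precisely when $G$ has $t$ isolated vertices is exactly your chain of reasoning). No gaps.
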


Holroyd, Spencer, and Talbot \cite{Holroyd/Spencer/Talbot:2005} showed
that if $G$ is the disjoint union of $d$ paths, cycles, and/or complete
graphs (including at least one isolated vertex), then $G$ is $r$-EKR
for all $r\leq d/2$. They did not address the strict EKR property. 
\begin{cor}
Let $G$ have $d$ connected components, and let $r<d/2$. (See e.g.
\cite[Lemma 6.11]{Jonsson:2008}.)
\begin{enumerate}
\item If $G$ has an isolated vertex, then it is strictly $r$-EKR.
\item If $G$ has at least $r$ isolated vertices, then it satisfies the
cross-intersecting condition of Theorems~\ref{thm:CrossIntClassic}
and \ref{thm:CrossIntShadow}.
\item If $G$ has at least $r+1$ isolated vertices, then it satisfies the
stability condition of Theorem~\ref{thm:SimplicialHM}.
\end{enumerate}
\end{cor}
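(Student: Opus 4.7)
The plan is to reduce each part of the corollary to the appropriate main theorem by verifying two structural facts about $\Delta := \mathrm{Ind}(G)$: that $\depth \Delta \geq d-1$, and that any set of isolated vertices of $G$ plays the role of an initial segment of apex vertices of $\Delta$.

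First I would establish the depth bound. Decomposing $G$ into its connected components $G_1 \sqcup \cdots \sqcup G_d$ induces a join decomposition $\Delta = \mathrm{Ind}(G_1) * \cdots * \mathrm{Ind}(G_d)$. Each join factor is nonempty (the empty set is independent), so each has depth at least $0$. Since depth is additive across joins up to a shift of one, iterating gives $\depth \Delta \geq d - 1$; this is essentially the content of the cited Lemma~6.11 of Jonsson.

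Next I would verify the shiftedness hypotheses. An isolated vertex $v$ has no neighbors, so replacing any vertex of an independent set by $v$ yields an independent set; hence if $v$ is isolated then $\Delta$ is a near-cone with apex $v$. More generally, if $v_1,\dots,v_k$ are distinct isolated vertices and we order the vertex set so that $v_1 < \cdots < v_k$ comes first, then the same observation shows that $\Delta$ is shifted with respect to the initial segment $\{v_1 < \cdots < v_k\}$.

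With these two structural facts in hand, the three parts follow by direct invocation: part~(1) from Theorem~\ref{thm:StrictEKR} with apex $v$; part~(2) from Theorems~\ref{thm:CrossIntClassic} and~\ref{thm:CrossIntShadow} applied to the length-$r$ initial segment of isolated vertices; and part~(3) from Theorem~\ref{thm:SimplicialHM} applied to the length-$(r+1)$ initial segment. The hypothesis $r < d/2$ is at least as strong as the $r \leq d/2$ and $r \leq (d+1)/2$ required in the main theorems. There is no essential obstacle: the corollary is really a dictionary translating graph-theoretic data (number of components, number of isolated vertices) into the simplicial-complex hypotheses (depth and shiftedness) of the main theorems, and the only substantive ingredient is the join formula for depth.
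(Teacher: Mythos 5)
Your proposal is correct, and the overall structure is the only one available: the corollary is indeed just a translation of graph data into the depth and partial-shiftedness hypotheses of the main theorems, and your verification that isolated vertices serve as apex vertices (so that $\mathrm{Ind}(G)$ is in fact a $t$-fold cone over $t$ isolated vertices, hence shifted with respect to them in any order) matches what the paper relies on. The one place where you take a genuinely different route is the depth bound $\depth\mathrm{Ind}(G)\geq d-1$. You obtain it from the join decomposition $\mathrm{Ind}(G)=\mathrm{Ind}(G_1)*\cdots*\mathrm{Ind}(G_d)$ together with additivity of depth under joins; the paper instead argues combinatorially (in the example of Section~\ref{subsec:VDcomplexes}) that every non-isolated vertex is a shedding vertex for the $(d-1)$-dimensional skeleton, so that skeleton is vertex-decomposable and hence Cohen--Macaulay. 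Your route needs the K\"unneth-type fact that depth adds (plus one) over joins, which is standard over a field and is close to the cited Lemma~6.11 of Jonsson; the paper's route is more elementary and stays entirely within the vertex-decomposability framework already set up. One small repair: your justification that each join factor has depth at least $0$ (``the empty set is independent'') only gives depth at least $-1$; what you actually need is that each $\mathrm{Ind}(G_i)$ contains a vertex, which holds because every connected component has a vertex and singletons are independent.
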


\noindent Holroyd and Borg \cite{Borg/Holroyd:2009} later proved
stronger EKR results for a class of graphs similar to (slightly broader
than) that in \cite{Holroyd/Spencer/Talbot:2005}; their results include
a proof of Conjecture~\ref{conj:HolroydTalbotBorg} in this class
of graphs. 

A matroid is a certain combinatorial generalization of linear/affine
independence among sets of points in a vector space. A coloop in a
matroid is a point that is contained in every maximal independent
set.
\begin{cor}
Let $\Delta$ be the independence complex of a matroid. (See \cite{Provan/Billera:1980}.)
\begin{enumerate}
\item If $\Delta$ has a coloop, then it satisfies Conjecture~\ref{conj:HolroydTalbotBorg}.
\item If $\Delta$ has at least $r$ coloops, then it satisfies the cross-intersecting
condition of Theorems~\ref{thm:CrossIntClassic} and \ref{thm:CrossIntShadow}.
\item If $\Delta$ has at least $r+1$ coloops, then it satisfies the stability
condition of Theorem~\ref{thm:SimplicialHM}.
\end{enumerate}
\end{cor}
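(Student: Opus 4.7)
The plan is to verify, in each case, the hypotheses of the corresponding theorem for the independence complex $\Delta$ of a matroid $M$ of rank $d$. Two standard facts will be used throughout. First, a vertex of $M$ is a coloop exactly when it may be adjoined to any independent set while preserving independence. Second, by Provan and Billera, $\Delta$ is pure shellable, hence sequentially Cohen-Macaulay; so every facet of $\Delta$ has $d$ vertices and the depth of $\Delta$ equals $d-1$ in the convention of this paper.

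The crucial observation is that $\Delta$ is shifted with respect to any coloop $v$. Indeed, given a face $B$ and $w \in B$, the set $(B \setminus w) \cup v$ is a face, being either a subset of $B$ (when $v \in B$) or a subset of $B \cup v$ (when $v \notin B$, using that $B \cup v$ is independent since $v$ is a coloop). Once this is in place, part (1) is immediate from the corollary on sequentially Cohen-Macaulay near-cones stated above, which combines the upper bound of \cite{Woodroofe:2011a} with Theorem~\ref{thm:StrictEKR}.

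For parts (2) and (3), I would verify the iterated condition ``shifted with respect to $\{v_1 < \cdots < v_k\}$'' by taking $v_1, \dots, v_k$ to be coloops of $M$, noting that $\link_{\Delta} v_1$ is the independence complex of the contraction $M / v_1$ and that each remaining $v_j$ is a coloop of this contraction. The shiftedness condition at each level then follows by induction from the crucial observation, and parts (2) and (3) are direct applications of Theorems~\ref{thm:CrossIntClassic}, \ref{thm:CrossIntShadow}, and \ref{thm:SimplicialHM}.

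The main (and mild) obstacle is the bookkeeping around how the definition of ``shifted with respect to an initial set of vertices'' interacts with matroid contraction; the key input is the standard fact that contracting a coloop preserves the remaining coloops. Beyond that, everything reduces to pulling together Provan--Billera shellability and the elementary coloop computation above.
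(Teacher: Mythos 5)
Your proposal is correct and follows essentially the same route as the paper: Provan--Billera gives that matroid independence complexes are pure and (vertex-decomposable, hence) sequentially Cohen-Macaulay, so the depth equals the common facet dimension $d-1$, and the coloops supply the required partial shiftedness. The only simplification available is that, by the paper's definition, a coloop lies in \emph{every} facet, so $\Delta$ with $t$ coloops is literally a $t$-fold cone and is therefore shifted with respect to those vertices without any contraction bookkeeping.
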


\subsection*{Organization}

This paper is organized as follows. In Section~\ref{sec:Background},
we overview the necessary background on simplicial complexes, depth,
and shifting. In Section~\ref{sec:Strict-EKR}, we prove our strict
EKR result, Theorem~\ref{thm:StrictEKR}. In Section~\ref{sec:HM},
we prove our stability result, Theorem~\ref{thm:SimplicialHM}. In
Section~\ref{sec:Cross-Intersecting}, we prove our cross-intersecting
results, Theorems~\ref{thm:CrossIntClassic} and \ref{thm:CrossIntShadow}.

\section{\label{sec:Background}Background}

\subsection{Simplicial complex basics}

An \emph{(abstract) simplicial complex} is a system $\Delta$ of sets
that is closed under inclusion. More precisely, elements of $\Delta$
are subsets of some universe set, and if $A\in\Delta$ and $B\subseteq A$,
then also $B\in\Delta$. For example, the \emph{$d$-simplex} is the
set of all subsets of a set with $d$ elements. The simplicial complexes
we consider will all be finite and nonempty.

The \emph{$f$-vector} or \emph{face vector} of $\Delta$ is the sequence
$f_{0},f_{1},\dots$, where $f_{0}=1$ is the number of $0$ element
sets in $\Delta$, $f_{1}$ is the number of $1$-element sets in
$\Delta$, and so forth. We take the nonstandard notation of $F_{k}(\Delta)$
for the set of $k$ element faces of $\Delta$, so that $f_{k}=\left|F_{k}\right|$.
Simplicial complexes are a basic object in algebraic topology \cite{Munkres:1984},
combinatorics \cite{Anderson:2002}, and in combinatorial algebra
and/or algebraic combinatorics \cite{Herzog/Hibi:2011,Stanley:1996,Villarreal:2015}.

The \emph{dimension} of a face $A$ is $\dim A:=\left|A\right|-1$.
The \emph{dimension} of a simplicial complex $\Delta$, denoted $\dim\Delta$,
is the maximum dimension of any face. A \emph{facet} is a maximal
(under set inclusion) face, so $\dim\Delta$ can also be described
as the maximum dimension of any facet. The \emph{$d$-dimensional
skeleton} of $\Delta$ is the subcomplex consisting of all faces of
dimension at most $d$.

There is a tension between dimension and cardinality in topological
combinatorics. While the tension is easily resolved by adding or subtracting
$1$, it can sometimes cause confusion. We will index the $f$-vector
by cardinality, and will say either \emph{$r$-face}, $r$ element
face,\emph{ }or $(r-1)$-dimensional face for a face containing $r$
vertices.

We say the \emph{link} of $A$ in $\Delta$ is the simplicial complex
\[
\link_{\Delta}A:=\left\{ B\in\Delta:A\cap B=\emptyset,A\cup B\in\Delta\right\} .
\]
An alternative description of $\link_{\Delta}A$ is that it is formed
by taking all sets in $\Delta$ that contain $A$, and removing the
subset $A$ from each of them. Thus, the link describes the faces
containing a vertex or other face.

The \emph{deletion} of a vertex $v$ is obtained by removing from
$\Delta$ all faces that contain $v$. Thus, 
\[
\del_{\Delta}v:=\{\tau\in\Delta:\tau\cap\{v\}=\emptyset\}.
\]
Note that deleting a set of vertices is a distinct operation from
removing a face from $\Delta$. It is easy to see that if $v$ is
a vertex and $A$ is a face not containing $v$, then deleting $v$
and taking the link of $A$ are commuting operations.

The \emph{independence complex }of a graph or simple hypergraph is
the family of all vertex subsets that contain no edges (the \emph{independent
sets}). Since a subset of an independent set is also independent,
the independence complex is indeed a simplicial complex.

\subsection{Depth and the Cohen-Macaulay property}

The invariant of depth of a simplicial complex is subtle, and yet
is surprisingly helpful in combinatorics.

We say that a simplicial complex $\Delta$ is \emph{Cohen-Macaulay}
over $\mathbb{F}$ (for $\ff$ a field or the integers) if 
\[
\tilde{H}_{i}(\link_{\Delta}A;\ff)=0\text{ for all }A\in\Delta\text{ and all }i<\dim\link_{\Delta}A.
\]
Here $\tilde{H}_{i}$ denotes the $i$th reduced simplicial homology
group from algebraic topology.

Note that the empty set is a face of every nondegenerate simplicial
complex, and that $\Delta=\link_{\Delta}\emptyset$. Thus, a simplicial
complex $\Delta$ is Cohen-Macaulay if and only if it satisfies the
following recursive formulation: either it is of dimension at most
$0$, or else both $\tilde{H}_{i}$ vanishes below the highest possibly-nonzero
dimension, and the link of every vertex is Cohen-Macaulay.

We typically suppress $\ff$ from the notation, as many examples of
importance are either Cohen-Macaulay over either every $\ff$ or over
no $\ff$.

The \emph{depth} of a simplicial complex $\Delta$, denoted $\depth\Delta$,
is the highest dimensional skeleton that is Cohen-Macaulay. Recall
here that the $d$-dimensional skeleton consists of all faces of dimension
at most $d$. Thus, a complex is Cohen-Macaulay if and only if $\depth\Delta=\dim\Delta$.
We remark that the Cohen-Macaulay property and depth arise from commutative
algebra through the ``face ring'' of $\Delta$, but that the commutative
algebraic depth and (Krull) dimension are both one larger in that
context. Indeed, this may be seen from one perspective as an example
of the tension between dimension and cardinality. See e.g. \cite[Appendix A]{Herzog/Hibi:2011}
for a development from the commutative algebra perspective.

It is sometimes useful to note that depth is a topological invariant
\cite{Munkres:1984b}. For example, any triangulation of a sphere
or ball is Cohen-Macaulay. Since $\tilde{H}_{0}(\Delta)$ is nonzero
if and only if $\Delta$ is disconnected, the depth of a connected
simplicial complex is at least $1$.

We observe from the recursive formulation of Cohen-Macaulay that:
\begin{cor}
\label{cor:DepthBoundLink}If $v$ is a vertex of the simplicial complex
$\Delta$, then $\depth(\link_{\Delta}v)\geq\depth\Delta-1$.
\end{cor}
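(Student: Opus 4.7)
The plan is to unpack the definition of depth in terms of skeleta, invoke the recursive form of the Cohen--Macaulay property, and observe that taking the link at $v$ commutes with taking an appropriate skeleton.

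Set $k = \depth \Delta$, so by definition the $k$-dimensional skeleton $\Delta^{(k)}$ is Cohen--Macaulay. Since $v$ is a vertex of $\Delta^{(k)}$ (we may assume $k \geq 1$, otherwise the conclusion is vacuous), the recursive formulation of Cohen--Macaulay immediately gives that $\link_{\Delta^{(k)}} v$ is Cohen--Macaulay.

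Next, I would verify the identity
\[
\link_{\Delta^{(k)}} v \;=\; (\link_{\Delta} v)^{(k-1)}.
\]
This is a short unwinding of definitions: $B$ lies in the left-hand side exactly when $v \notin B$, $B \cup \{v\} \in \Delta$, and $|B \cup \{v\}| \leq k+1$; the first two conditions say $B \in \link_\Delta v$, and the third says $|B| \leq k$, i.e., $B$ belongs to the $(k-1)$-dimensional skeleton of $\link_\Delta v$. Combining this identity with the previous step, $(\link_{\Delta} v)^{(k-1)}$ is Cohen--Macaulay, so $\depth(\link_\Delta v) \geq k-1 = \depth \Delta - 1$, as desired.

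There is no real obstacle here; the only point requiring any care is to confirm that the Cohen--Macaulay property descends to the vertex link, which is precisely what the recursive reformulation provides, and to check the skeleton/link commutation, which is a direct cardinality computation.
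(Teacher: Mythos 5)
Your proof is correct and is exactly the argument the paper intends: the corollary is stated as an immediate observation from the recursive formulation of Cohen--Macaulayness, and your write-up simply makes explicit the two ingredients (that the recursive formulation passes the CM property of $\Delta^{(k)}$ to vertex links, and that $\link_{\Delta^{(k)}}v=(\link_{\Delta}v)^{(k-1)}$). No difference in approach, and the details you supply are the right ones.
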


A lower bound on the deletion also holds, though it is typically less
useful than the bound on the link. The following bound may be proved
with a standard long exact sequence argument; see e.g. \cite[Proposition 1.2.9]{Bruns/Herzog:1993}. 
\begin{lem}
\label{lem:DepthBoundDel}If $v$ is a vertex of the simplicial complex
$\Delta$, then $\depth(\del_{\Delta}v)\geq\depth\Delta-1$
\end{lem}

If we desire to specify a field for the Cohen-Macaulay property, we
may write $\depth_{\ff}\Delta$. An alternative formulation of depth
is 
\[
\depth_{\ff}\Delta=\max\{d:\tilde{H}_{i}(\link_{\Delta}A;\ff)=0\text{ for all }A\in\Delta\text{ and }i<d-\left|A\right|\}.
\]

It is a basic fact (and a straightforward exercise to prove) that
the facets of a Cohen-Macaulay simplicial complex all have the same
dimension. It follows immediately that the minimum facet dimension
of $\Delta$ is at least the depth. In the case where the two are
equal, we say that $\Delta$ has \emph{facet depth}.
\begin{cor}[of Theorem~\ref{thm:StrictEKR}]
 Near-cones with facet depth satisfy Conjecture~\ref{conj:HolroydTalbotBorg}.
\end{cor}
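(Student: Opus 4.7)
The plan is to observe that this corollary simply packages Theorem~\ref{thm:StrictEKR} together with the non-strict EKR result for near-cones already established in \cite{Woodroofe:2011a}. Let $\Delta$ be a near-cone with apex vertex $a$ whose smallest facet has $d$ vertices, and suppose $\Delta$ has facet depth, so that $\depth \Delta = d-1$. Conjecture~\ref{conj:HolroydTalbotBorg} asserts two things: the EKR upper bound $|\mathcal{A}| \leq f_{r-1}(\link_{\Delta} v)$ for some vertex $v$ when $r \leq d/2$, and the strict conclusion that every extremal $\mathcal{A}$ has a common intersection when $r < d/2$.

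For the first part, I would invoke the main theorem of \cite{Woodroofe:2011a}, which asserts that any near-cone satisfying the depth condition $\depth \Delta \geq d-1$ satisfies the EKR upper bound for $r \leq d/2$. This applies verbatim since the facet depth hypothesis is exactly what is needed.

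For the strict part, I would apply Theorem~\ref{thm:StrictEKR} directly with the apex vertex $a$: since $\Delta$ is by definition shifted with respect to $a$, and $\depth \Delta = d-1$, the theorem concludes that $\Delta$ is strictly $r$-EKR for every $r < d/2$, which is the strict statement in the conjecture.

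There is essentially no obstacle here, since all the genuine work has been carried out in Theorem~\ref{thm:StrictEKR} and in the prior result from \cite{Woodroofe:2011a}. The only point to verify is the definitional match: ``facet depth'' was defined as the equality of $\depth \Delta$ with the minimum facet dimension, so minimum facet size $d$ forces $\depth \Delta = d-1$, exactly the hypothesis of both invoked results.
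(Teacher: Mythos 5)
Your argument is correct and is exactly what the paper intends: the corollary is obtained by combining the non-strict upper bound from \cite{Woodroofe:2011a} (which applies since facet depth gives $\depth\Delta=d-1\geq 2r-1$ for $r\leq d/2$) with Theorem~\ref{thm:StrictEKR} for the strict part, using that a near-cone is by definition shifted with respect to its apex vertex. The definitional bookkeeping you check at the end (minimum facet size $d$ plus facet depth yields $\depth\Delta=d-1$) is the only thing to verify, and you verify it correctly.
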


Additional background on the Cohen-Macaulay property may be found
in any survey or textbook on combinatorial commutative algebra or
topological combinatorics, for example \cite{Herzog/Hibi:2011,Kozlov:2008,Stanley:1996}.
Additional background on depth may be found in \cite{Herzog/Hibi:2011,Jonsson:2008,Woodroofe:2011a}.

\subsection{\label{subsec:VDcomplexes}Vertex-decomposable and sequentially Cohen-Macaulay}

A typical proof from the literature that a simplicial complex is Cohen-Macaulay
or has facet depth uses certain simple combinatorial decompositions.
It is often not necessary to work directly with the homological definition.

A simplicial complex is recursively defined to be \emph{vertex-decomposable}
if it is either
\begin{enumerate}
\item a simplex, or
\item has a vertex $v$ so that
\begin{enumerate}
\item if $v$ is in a face $A$, then there is some vertex $w\neq v$ so
that $A\setminus v\cup w$ is a face, and
\item both $\del_{\Delta}v$ and $\link_{\Delta}v$ are (recursively) vertex-decomposable.
\end{enumerate}
\end{enumerate}
A vertex satisfying the condition of (2) is called a \emph{shedding
vertex}.

Vertex-decomposability was introduced by Provan and Billera \cite{Provan/Billera:1980}
for the special case where all facets have the same dimension, and
extended by Björner and Wachs in \cite{Bjorner/Wachs:1997} to arbitrary
simplicial complexes.
\begin{example}
If $\Delta$ is the independence complex of a chordal graph, then
when $v$ is a neighbor of a simplicial vertex $w$, we may exchange
$v$ for $w$ in any independent set containing $v$. A simple inductive
argument yields that $\Delta$ is vertex-decomposable \cite{Dochtermann/Engstrom:2009,Woodroofe:2009a}.
A slightly more difficult argument \cite{Woodroofe:2009a} gives the
same for a graph where induced cycles have length $3$ or $5$.
\end{example}

\begin{example}
\label{exa:ShiftedIsVD}If $\Delta$ is shifted and not a simplex,
then the last vertex may be exchanged for an earlier vertex in any
face that it appears in. It follows easily that a shifted simplicial
complex is vertex-decomposable \cite[Section 11]{Bjorner/Wachs:1997}.
\end{example}

\begin{example}
If $\Delta$ is the independence complex of a matroid, then it follows
directly from the exchange condition that every vertex that is not
a coloop is a shedding vertex. Since links and deletions are also
independence complexes of matroids, it follows that $\Delta$ is vertex-decomposable
\cite[Section 3.2]{Provan/Billera:1980}.
\end{example}

The following is a basic result in topological combinatorics and combinatorial
commutative algebra.
\begin{prop}
\label{prop:VDhasFacetDepth}Any vertex-decomposable complex has facet
depth.
\end{prop}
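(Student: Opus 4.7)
The plan is to proceed by induction on the number of vertices of $\Delta$. For the base case $\Delta$ is a simplex, which is Cohen-Macaulay (all of its links are themselves simplices, with trivial reduced homology), so its depth equals its dimension and matches the dimension of its unique facet.

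For the inductive step, let $v$ be a shedding vertex of $\Delta$ and write $d$ for the minimum facet dimension of $\Delta$. The first point is to argue that every facet $G$ of $\del_{\Delta}v$ is in fact a facet of $\Delta$: if $G\cup x\in\Delta$ for some $x\notin G$, then $x\neq v$ would contradict maximality of $G$ in $\del_{\Delta}v$, while in the case $x=v$, applying the shedding condition to $G\cup v$ produces some $w\notin G\cup v$ with $G\cup w\in\del_{\Delta}v$, again contradicting maximality. Consequently, the minimum facet dimension of $\del_{\Delta}v$ is at least $d$. Facets of $\link_{\Delta}v$ are of the form $F\setminus v$ for facets $F\ni v$ of $\Delta$, so the minimum facet dimension of $\link_{\Delta}v$ is at least $d-1$. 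Applying the inductive hypothesis to the vertex-decomposable complexes $\del_{\Delta}v$ and $\link_{\Delta}v$, both of which have fewer vertices, yields $\depth\del_{\Delta}v\geq d$ and $\depth\link_{\Delta}v\geq d-1$.

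It remains to deduce $\depth\Delta\geq d$; combined with the standard upper bound $\depth\Delta\leq d$ recalled in the excerpt, this gives $\depth\Delta=d$ and completes the induction. The key step is the inequality
\[
\depth\Delta\;\geq\;\min\bigl(\depth\del_{\Delta}v,\ \depth\link_{\Delta}v+1\bigr),
\]
which I would prove from the link-based formulation of depth given in the background section. Concretely, for each face $A$ of $\Delta$ one verifies that $\tilde{H}_{i}(\link_{\Delta}A)$ vanishes below the appropriate threshold: if $v\in A$, the link coincides with a link in $\link_{\Delta}v$; if $v\notin A$ and $A\cup v\notin\Delta$, it coincides with a link in $\del_{\Delta}v$; otherwise one decomposes $\link_{\Delta}A$ as the union of $\link_{\del_{\Delta}v}A$ with the star of $v$ inside $\link_{\Delta}A$, whose intersection is $\link_{\link_{\Delta}v}A$. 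Since the star is contractible, the long exact Mayer-Vietoris sequence bounds $\tilde{H}_{i}(\link_{\Delta}A)$ in terms of $\tilde{H}_{i}(\link_{\del_{\Delta}v}A)$ and $\tilde{H}_{i-1}(\link_{\link_{\Delta}v}A)$, both of which vanish in the required range by the inductive bounds. This Mayer-Vietoris manipulation is the main technical point, but it is a standard part of the shedding-vertex toolkit and appears essentially in this form in \cite{Woodroofe:2011a}.
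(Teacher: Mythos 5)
Your proof is correct, but it takes a different route from the paper, which does not prove the proposition directly at all: the paper derives it from the (cited) fact that vertex-decomposable complexes are sequentially Cohen--Macaulay, observing that facet depth only requires the skeleton at the minimum facet dimension to be Cohen--Macaulay. Your argument is instead a self-contained induction: the observation that a shedding vertex forces every facet of $\del_{\Delta}v$ to be a facet of $\Delta$ (so the minimum facet dimensions of $\del_{\Delta}v$ and $\link_{\Delta}v$ are at least $d$ and $d-1$ respectively), followed by the inequality $\depth\Delta\geq\min(\depth\del_{\Delta}v,\ \depth\link_{\Delta}v+1)$ proved by Mayer--Vietoris applied to the cover of $\link_{\Delta}A$ by $\link_{\del_{\Delta}v}A$ and the (contractible) star of $v$, with intersection $\link_{\link_{\Delta}v}A$. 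All three cases of the link computation check out, and the degree bookkeeping ($i<d-\left|A\right|$ for the deletion piece, $i-1<(d-1)-\left|A\right|$ for the intersection piece) is right. What your approach buys is independence from the sequentially Cohen--Macaulay machinery and from shellability, at the cost of redoing a Mayer--Vietoris argument that is indeed standard (it appears in essentially this form in the cited literature, e.g.\ \cite{Woodroofe:2011a}); what the paper's approach buys is brevity and the stronger conclusion (sequential Cohen--Macaulayness) as a byproduct. One small point worth making explicit if you write this up: the shedding condition as stated requires $w\neq v$, but your argument (correctly) uses the intended reading $w\notin A$, since otherwise condition (2)(a) is vacuous.
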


More generally, vertex-decomposable complexes have the stronger property
of \emph{sequentially Cohen-Macaulay}, which means that for any $d$,
the subcomplex generated by all $d$-faces is Cohen-Macaulay. Since
facet depth only requires a single such subcomplex (at the size of
a minimal facet) to be Cohen-Macaulay, it is immediate that any sequentially
Cohen-Macaulay complex has facet depth. The property of \emph{shellability}
is also discussed extensively in the literature, and is weaker than
vertex-decomposability but stronger than sequentially Cohen-Macaulay.

Vertex-decomposability may also occasionally be used on a skeleton
of $\Delta$ to give a lower bound on depth.
\begin{example}
If $\Delta$ is the independence complex of a graph with $r$ connected
components, then if $A$ is an independent set with at most $r$ vertices,
and $w\in A$, then there is at least one connected component $C$
so that $A\setminus w$ contains no vertex of $C$. Thus, we see that
$A\setminus w\cup v$ is an independent set for any $v\in C$. It
follows that any non-isolated vertex is a shedding vertex, and a simple
induction yields that the $(r-1)$-dimensional skeleton is vertex-decomposable,
hence that $\depth\Delta\geq r-1$. This bound may be improved in
certain circumstances, as explained in \cite[Section 4]{Woodroofe:2011a}
or more generally in \cite[Section 6.3]{Jonsson:2008}.
\end{example}

Vertex-decomposability is often nicely covered in the combinatorial
commutative algebra literature \cite{Herzog/Hibi:2011,Villarreal:2015}.
Textbooks coming more from the topological and/or combinatorial point
of view such as \cite{Jonsson:2008,Kozlov:2008} unfortunately often
only treat the special case where all facets have the same dimension,
which is not always convenient.

\subsection{On shifted and shifting}

Typically, problems in combinatorial set theory are easier to solve
on shifted set systems. Here, a system $\mathcal{A}$ of subsets of
$\{1,2,\dots,n\}$ is \emph{shifted} if whenever $A\in\mathcal{A}$
is such that $j\in A$ and $i\notin A$ for $i<j$, then also $(A\setminus j)\cup i\in\mathcal{A}$.
Thus, a main proof strategy is to transform a given set system into
a shifted one (while preserving some structure from the original).
Indeed, the original proof of Erd\H{o}s, Ko, and Rado \cite{Erdos/Ko/Rado:1961}
followed this strategy.

A powerful method of transforming a set system into a shifted one
is the \emph{(exterior) algebraic shifting} shifting operation $\shiftext_{\ff}$
over a fixed infinite field $\ff$. We frequently suppress the field
$\ff$ from our notation. We describe the properties of this operation
first, and describe its construction below.
\begin{lem}
\label{lem:AlgebraicShiftingFacts}Let $\mathcal{A}$ be a set system,
and $\Gamma,\Delta$ be simplicial complexes.
\begin{enumerate}
\item \label{enu:AlgShift1stKalai}$\shiftext\mathcal{A}$ is a shifted
set system, with the same cardinality of that of $\mathcal{A}$.
\item If $\mathcal{A}$ is shifted, then $\shiftext\mathcal{A}=\mathcal{A}$.
\item \label{enu:AlgShiftLastKalai}If $\mathcal{A}$ consists of $k$ element
sets, then $\bdry\shiftext\mathcal{A}\subseteq\shiftext\bdry\mathcal{A}$.
In particular, the shift of a simplicial complex is a simplicial complex.
\item \label{enu:AlgShiftIntersect}If $\mathcal{A}$ is intersecting, then
so is $\shiftext\mathcal{A}$. Similarly, if $\mathcal{A}$ is cross-intersecting
with another set system $\mathcal{B}$, then $\shiftext\mathcal{A}$
and $\shiftext\mathcal{B}$ are also cross-intersecting.
\item \label{enu:AlgShiftSimplicial}If $\Gamma\subseteq\Delta$, then $\shiftext\Gamma\subseteq\shiftext\Delta$.
\item \label{enu:AlgShiftingDepthDim}$\depth_{\ff}\Delta=\depth_{\ff}\shiftext_{\ff}\Delta$.
Thus, since shifted complexes have facet depth, the dimension of a
minimum facet of $\shiftext_{\ff}\Delta$ is exactly $\depth_{\ff}\Delta$.
\item \label{enu:AlgShiftingHomol}The homology of $\Delta$ is preserved
by shifting. That is, 
\[
\tilde{H}_{i}(\Delta;\ff)\cong\tilde{H}_{i}(\shiftext_{\ff}\Delta;\ff)\text{ for all }i.
\]
\item \label{enu:AlgShiftingNC}If $\Delta$ is a near-cone with apex vertex
$a$, then the $f$-vectors of $\link_{\Delta}a$ and $\del_{\Delta}a$
coincide with those of the apex vertex of $\shiftext\Delta$.
\end{enumerate}
\end{lem}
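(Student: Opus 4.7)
The plan is to set up the exterior algebraic shift explicitly and then read off each property, relying on classical results of Kalai for the more substantive items. Fix a generic linear map $\varphi$ on $\ff^V$, where $V=\{v_1<\cdots<v_n\}$, and extend $\varphi$ multiplicatively to an endomorphism of $\extalg \ff^V$. For a set system $\mathcal{A}$ of $k$-element subsets of $V$, let $U_\mathcal{A} \subseteq \extalg^k \ff^V$ be the span of $\{\varphi(e_A) : A \in \mathcal{A}\}$, and declare $B \in \shiftext \mathcal{A}$ precisely when $e_B$ is a pivot in the reduced colex-echelon basis of $U_\mathcal{A}$. For a simplicial complex we apply this to each face cardinality and take the union.

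Items (1), (2), (3), and (5) are routine consequences of the construction. The cardinality equality in (1) is $\left|\shiftext \mathcal{A}\right| = \dim U_\mathcal{A} = \left|\mathcal{A}\right|$, and shiftedness plus the fixed-point property (2) follow from the genericity of $\varphi$ via a standard exchange argument on pivot patterns. For (3), one observes that the partial derivatives on the exterior algebra essentially commute with $\varphi$, so if $e_B$ is a pivot for $U_\mathcal{A}$ then $e_{B \setminus w}$ is a pivot for $U_{\bdry \mathcal{A}}$ for each $w \in B$. Item (5) is immediate, since $\Gamma \subseteq \Delta$ yields $U_\Gamma \subseteq U_\Delta$ in each cardinality and pivots of the smaller space lie among the pivots of the larger.

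For the intersecting property (4), I would use the following slick argument. If $\mathcal{A}$ is intersecting, then $e_A \wedge e_B = 0$ in $\extalg \ff^V$ for all $A, B \in \mathcal{A}$, and hence $\varphi(e_A) \wedge \varphi(e_B) = \varphi(e_A \wedge e_B) = 0$; by bilinearity every pair of vectors in $U_\mathcal{A}$ has trivial wedge. Now if $B, B' \in \shiftext \mathcal{A}$ satisfied $B \cap B' = \emptyset$, I would take $u, u' \in U_\mathcal{A}$ whose colex-leading terms are $e_B$ and $e_{B'}$; then $u \wedge u'$ has colex-leading term $\pm e_{B \cup B'} \neq 0$, contradicting $u \wedge u' = 0$. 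The cross-intersecting variant follows by the same argument, using $U_\mathcal{A}$ and $U_\mathcal{B}$ in place of two copies.

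Items (6) and (7) are the deep content, and I would cite Kalai rather than reprove. Preservation of reduced homology and the depth equality $\depth_\ff \Delta = \depth_\ff \shiftext_\ff \Delta$ are established via generic initial ideal techniques applied to the exterior face ring; the shifted complex $\shiftext \Delta$ is vertex-decomposable by Example~\ref{exa:ShiftedIsVD}, hence has facet depth by Proposition~\ref{prop:VDhasFacetDepth}, so its minimum facet dimension equals $\depth_\ff \shiftext \Delta = \depth_\ff \Delta$. For the near-cone statement (8), shiftedness with respect to $a$ means that the combinatorial shift $\shift_{a,v}$ fixes $\Delta$ for every other vertex $v$; a direct computation then identifies $\del_{\shiftext \Delta}(v_1)$ with $\shiftext \del_\Delta a$ and $\link_{\shiftext \Delta}(v_1)$ with $\shiftext \link_\Delta a$, and the $f$-vector claim follows from (1). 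The main obstacle is item (6): preservation of depth under algebraic shifting is the subtle and nontrivial fact that makes algebraic shifting preferable to combinatorial shifting for the present purposes.
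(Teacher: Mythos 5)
Your proposal is essentially correct and, in substance, takes the same route as the paper: the paper offers no proof of this lemma at all, but simply attributes parts (1)--(3) and (5) to Kalai's survey, (4) to Kalai, (6) to Kalai/Duval, (7) to Kalai/Herzog--Hibi, and (8) to Kalai (cone case) and Nevo (near-cone case). What you add beyond the paper is a concrete realization of $\shiftext$ as the pivot set of a generically transformed subspace together with sketches of the formal items, and these sketches are the standard ones. In particular your wedge-product argument for (4) is the classical proof and is sound: with a multiplicative term order, the coefficient of $e_{B\cup B'}$ in $u\wedge u'$ comes only from the pair of leading terms, so disjoint pivots would force a nonzero product, contradicting $U_{\mathcal{A}}\wedge U_{\mathcal{A}}=0$.

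Two cautions. First, item (8) is the one place where you assert something genuinely nontrivial without support: for a near-cone that is not a cone, the identification of $\link_{\shiftext\Delta}(v_1)$ and $\del_{\shiftext\Delta}(v_1)$ with $\shiftext(\link_\Delta a)$ and $\shiftext(\del_\Delta a)$ is not a ``direct computation'' but is Nevo's theorem on algebraic shifting of near-cones (the cone case is easier and is in Kalai); as written, your argument for (8) only restates the conclusion. Second, a minor bookkeeping point: for the pivot set to be shifted in the paper's convention (replacing $j\in B$ by a smaller $i\notin B$), and for item (2) to return $\mathcal{A}$ itself rather than its order-reversal, you must fix the order and the choice of leading/trailing term consistently; ``reduced colex-echelon basis'' leaves this ambiguous, and with the wrong convention (1) and (2) fail as stated. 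Neither issue affects the validity of the lemma, and both are resolved by the references the paper cites.
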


Here, parts (\ref{enu:AlgShift1stKalai})-(\ref{enu:AlgShiftLastKalai})
and (\ref{enu:AlgShiftSimplicial}) may be found (with further references)
in \cite[Section 2.5]{Kalai:2002}, and (\ref{enu:AlgShiftIntersect})
is in \cite[Section 6.4]{Kalai:2002}. Preservation of depth (\ref{enu:AlgShiftingDepthDim})
follows from \cite[Section 4.2]{Kalai:2002}, or a clear statement
is in \cite[Corollary 4.5]{Duval:1996}. Preservation of homology
(\ref{enu:AlgShiftingHomol}) is stated in \cite[Section 3.3]{Kalai:2002},
or an algebraic proof is given in \cite[Section 11.4]{Herzog/Hibi:2011}.
Part (\ref{enu:AlgShiftingNC}) is considered in \cite[Section 2.5]{Kalai:2002}
for the case of a cone (where every facet contains the apex vertex),
and is extended to near-cones in \cite{Nevo:2005}. See also the discussion
in \cite[Section 2.2]{Woodroofe:2011a}; note that since $\shiftext\Delta$
is shifted, it is in particular a near-cone.

Thus, the algebraic shift operation replaces a given simplicial complex
or other set system with a shifted one in a single step. We refer
the reader to \cite{Herzog/Hibi:2011,Kalai:2002} for general background
on algebraic shifting; Duval made particular use of the fact that
algebraic shifting preserves depth in \cite{Duval:1996}.
\begin{example}
Consider the simplicial complex $\Delta$ consisting of all subsets
of the sets $\left\{ 1,2,3\right\} $ and $\left\{ 1,4,5\right\} $.
Geometrically, this may be described as two triangles sharing a vertex;
the $f$-vector is $1,5,6,2$, and it is possible to show that the
depth is $1$. The only shifted family containing two $3$ element
sets consists of $\left\{ 1,2,3\right\} $ and $\left\{ 1,2,4\right\} $.
Since the shadows of this family have only $5$ edges and $4$ vertices,
we see that $\shiftext\Delta$ has also an extra vertex $5$, and
an extra edge, necessarily $\{1,5\}$. In particular, a shifted simplicial
complex with the same $f$-vector as $\Delta$ must have a facet containing
fewer than 3 vertices.
\end{example}

An immediate corollary of Lemma~\ref{lem:AlgebraicShiftingFacts}~(\ref{enu:AlgShiftingHomol})
is as follows. We say that a family $\mathcal{A}$ of $k$ element
sets \emph{spans a simplex boundary} if it contains all $k$ element
subsets of some $k+1$ element set. That is, the family $\mathcal{A}$
spans a simplex boundary if it contains all of the facets of some
$k$-dimensional simplex.
\begin{cor}
\label{cor:ShiftedNontrivial}If $\mathcal{A}$ spans a simplex boundary,
then $\shift^{\wedge}\mathcal{A}$ has the same property.
\end{cor}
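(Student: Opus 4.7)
The plan is to encode the property of spanning a simplex boundary as nonvanishing of top-dimensional reduced homology and then apply Lemma~\ref{lem:AlgebraicShiftingFacts}~(\ref{enu:AlgShiftingHomol}).

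First I would let $\Delta=\langle\mathcal{A}\rangle$ be the smallest simplicial complex having $\mathcal{A}$ as faces, so that $\Delta$ is pure of dimension $k-1$. If $\sigma$ is a $(k+1)$-element set witnessing that $\mathcal{A}$ spans a simplex boundary, then the alternating sum $\sum_{v\in\sigma}\pm(\sigma\setminus v)$ is a $(k-1)$-cycle in $\Delta$; as $C_{k}(\Delta)=0$, this cycle cannot bound, so $\tilde{H}_{k-1}(\Delta)\neq0$. Preservation of homology under $\shiftext$ then yields $\tilde{H}_{k-1}(\shiftext\Delta)\neq0$.

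The main step is to extract a simplex boundary from the $k$-element faces of $\Gamma:=\shiftext\Delta$, using that $\Gamma$ is shifted. I would argue that some $k$-face $F\in\Gamma$ must avoid the apex vertex $a$: otherwise the subcomplex $\Gamma'$ of $\Gamma$ generated by the $k$-faces would be a cone with apex $a$, hence contractible, and since $\Gamma$ again has no $(k+1)$-element faces (the $f$-vector being preserved by $\shiftext$), the top reduced homology of $\Gamma$ would coincide with that of $\Gamma'$, forcing $\tilde{H}_{k-1}(\Gamma)=0$. Once such an $F$ with $a\notin F$ is available, the shifted structure gives $(F\setminus v)\cup\{a\}\in\Gamma$ for every $v\in F$, and these faces together with $F$ are exactly the $k$-subsets of $F\cup\{a\}$.

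Finally, since algebraic shifting acts separately on each cardinality-graded component, the $k$-element faces of $\shiftext\Delta$ coincide with $\shiftext\mathcal{A}$, so $\shiftext\mathcal{A}$ spans a simplex boundary as required. I expect the main subtlety to lie in justifying that contractibility of the subcomplex generated by the $k$-faces forces $\tilde{H}_{k-1}(\Gamma)=0$; this reduces to the clean fact that in the absence of $(k+1)$-element faces, the top reduced homology depends only on the subcomplex generated by the $k$-element faces, so the relevant $(k-1)$-cycle and boundary groups for $\Gamma$ and $\Gamma'$ agree.
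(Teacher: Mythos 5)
Your proof is correct and follows the same route the paper intends: the paper derives this corollary directly from Lemma~\ref{lem:AlgebraicShiftingFacts}~(\ref{enu:AlgShiftingHomol}) (preservation of homology), treating the translation between ``spans a simplex boundary'' and nonvanishing top homology of the generated complex as immediate. You have simply supplied the details the paper omits --- the nonbounding cycle in $\langle\mathcal{A}\rangle$, the cone/apex argument showing a shifted complex with nonzero top homology has a facet missing the apex, and the degree-by-degree compatibility of $\shiftext$ --- all of which are sound.
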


In particular, if $\mathcal{A}$ spans a simplex boundary, then $\shiftext\mathcal{A}$
has no common intersection.
\begin{rem}
\label{rem:AlgShiftConstruction}For completeness, we sketch one construction
of exterior algebraic shifting. This construction uses more machinery
than some other descriptions, but the setting may make it more plausible
that the operation is well-behaved. The details will not be important
in the rest of the paper, and can be skipped on a first reading.

We model our subsets of $\{1,2,,\dots,n\}$ as monomials in an exterior
algebra $\extalg\ff^{n}$, and our set system as a subspace $L$ of
the exterior algebra. This is convenient for several reasons, one
being that multiplication allows us to detect whether two sets intersect
nontrivially or not. We then replace the standard basis $e_{1},\dots e_{n}$
with a generic basis (satisfying no nontrivial algebraic equations)
$g_{1},\dots,g_{n}$. In this basis, we act by the diagonal matrix
$D$ with $t^{1},t^{4},\dots,t^{2^{2^{n-1}}}$ in its entries, extend
the action to forms in $\extalg\ff^{n}$ and subspaces thereof, and
take a limit as $t\to0$. We projectivize, so that e.g. $g_{1}+tg_{2}\equiv\frac{1}{t}g_{1}+g_{2}$.
Then in this limit, the term in an element that survives is the one
that is assigned the lowest power of $t$ by $D$. That is, the term
that survives is exactly the lexicographically first monomial. (This
term is unique by choice of powers of $t$ in the diagonal.) By the
lexicographic condition, the result corresponds to a shifted set system,
which is the exterior algebraic shift.

Since we are moving a representation of our system along a curve in
a space, certain ``Zariski closed'' properties are preserved. (In
algebraic geometry, this corresponds to the ``flat degeneration''.)
\end{rem}

Algebraic shifting is powerful, but relatively opaque. As a result,
we combine it with a second shifting operation. The \emph{combinatorial
shift} from $w$ to $v$ is the operation $\shift_{v\leftarrow w}$
on a set system $\mathcal{A}$, defined as 
\begin{align*}
\shift_{v\leftarrow w}\mathcal{A}= & \left\{ A\in\mathcal{A}:w\notin A\text{ or }v\in A\text{ or }A\setminus w\cup v\in\mathcal{A}\right\} \\
 & \cup\left\{ A\setminus w\cup v:w\in A\text{ and }v\notin A\right\} .
\end{align*}
Thus, $\shift_{v\leftarrow w}$ replaces $w$ with $v$ in a set $A$
containing $w$ but not $v$, so long as $A\setminus w\cup v$ is
not already in the system. It is a basic result of the field \cite{Frankl:1987}
that a system of subsets of $[n]$ stabilizes under iterative application
of shifting operations $\shift_{i\leftarrow j}$ with $i<j$, and
it is immediate from definitions that a system that is invariant under
all such $\shift_{i\leftarrow j}$ is a shifted system. Combinatorial
shifting is surveyed in \cite{Frankl:1987}. It is compared with algebraic
shifting in \cite{Murai/Hibi:2009}, as is also described in \cite[Chapter 11]{Herzog/Hibi:2011}.

The combinatorial shift of a complex is computable, and is in certain
circumstances a little easier to control than algebraic shifting.
On the other hand, we will need to apply many combinatorial shifts
to reduce to a shifted system. Understanding the interactions between
a large number of $\shift_{i\leftarrow j}$ operations is not trivial,
and the algebraic shift can be more convenient to work with than the
aggregate of combinatorial shifts.

As observed by Murai and Hibi \cite{Murai/Hibi:2009}, combinatorial
shifting may also in some circumstances be less well-behaved than
algebraic shifting. We give a more concrete example of this phenomenon.
\begin{example}
Let $\Delta$ be the $1$-dimensional simplicial complex with facets
$\{1,2\},\{2,3\},$ $\{3,4\}$. Since $\Delta$ is the independence
complex of a tree, it is Cohen-Macaulay (this is also easy to check
directly). After performing $\shift_{1\leftarrow4}$, we are left
with facets $\{1,2\},\{1,3\},\{2,3\},\{4\}$. We see that combinatorial
shifting may not preserve minimal facet size, even in a complex with
facet depth. On the other hand, it follows by examination of Lemma~\ref{lem:AlgebraicShiftingFacts}
that $\shiftext\Delta$ has facets $\{1,2\},\{1,3\},\{1,4\}$.
\end{example}

\begin{rem}
Work of Murai and Hibi \cite{Murai/Hibi:2009}, of Knutson \cite{Knutson:2014UNP}
and of the authors with Gandini \cite{Bulavka/Gandini/Woodroofe:2024UNP}
describes combinatorial shifting in a similar algebraic geometry setting
to that of algebraic shifting. A recent preprint of Della Vecchia,
Joswig, and Lenzen \cite{DellaVecchia/Joswig/Lenzen:2024UNP} shows
that a suitable (non-generic) basis in the situation of Remark~\ref{rem:AlgShiftConstruction}
may also be used to realize combinatorial shifting.
\end{rem}

\subsection{Near-cones and partially shifted complexes}

Our main theorems all require simplicial complexes that are shifted
with respect to one or more vertices.

The starting point for this is a near-cone with apex vertex $v_{1}$,
which (as previously discussed) is a complex that is invariant under
all operations $\shift_{v_{1}\leftarrow w}$. The terminology comes
about because of a particular special case: a \emph{cone} is a simplicial
complex where every facet contains an apex vertex $v_{1}$. We see
that if $\Delta$ is a near-cone, then $\Delta$ consists of a cone
$\Delta'$, together with additional facets whose boundaries lie in
$\Delta'$. We record some easy facts about near-cones:
\begin{lem}
\label{lem:NearConeFacts} If $\Delta$ is a near-cone with apex vertex
$a$, then:
\begin{enumerate}
\item For any $r$, the quantity $f_{r}(\link_{\Delta}v)$ is maximized
at $v=a$.
\item If $\mathcal{A}$ is a system of $r$ element faces of $\Delta$,
then after performing the $\shift_{a\leftarrow w}$ operation on $\mathcal{A}$
for each $w\neq a$, we obtain a system $\mathcal{A}'$ that is shifted
with respect to $a$.
\end{enumerate}
\end{lem}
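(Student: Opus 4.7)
My plan is to treat the two parts separately; both reduce to direct consequences of the near-cone property.

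For part (1), I would construct an explicit injection from $F_r(\link_\Delta v)$ into $F_r(\link_\Delta a)$, which immediately gives the desired inequality (the case $v=a$ being trivial). Given an $r$-element face $B$ of $\link_\Delta v$, so that $v\notin B$ and $B\cup\{v\}\in\Delta$, I send $B$ to itself if $a\notin B$, and to $(B\setminus\{a\})\cup\{v\}$ if $a\in B$. In the first case, applying the near-cone condition to the face $B\cup\{v\}$ at the vertex $v$ produces $B\cup\{a\}\in\Delta$, placing $B$ in $\link_\Delta a$; in the second case, the image's union with $\{a\}$ is simply $B\cup\{v\}\in\Delta$. Injectivity is immediate: the rule is a bijection onto its image within each case, and the images of the two cases are distinguished by whether they contain $v$.

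For part (2), I would argue by induction on the number of shift operations performed, showing that once some $\shift_{a\leftarrow w}$ has been applied, any subsequent $\shift_{a\leftarrow w'}$ with $w'\neq a$ preserves the shift property that the first installed. The base step is immediate from the definition of the combinatorial shift: after $\shift_{a\leftarrow w}$, any set $A$ in the family with $w\in A$ and $a\notin A$ must have $(A\setminus\{w\})\cup\{a\}$ also in the family, since otherwise $A$ itself would have been swapped out. For the inductive step, suppose the family already enjoys the shift property for the pair $(a,w)$ and then $\shift_{a\leftarrow w'}$ is applied. If $A$ remains in the new family with $a\notin A$ and $w\in A$, then $A$ cannot have been newly inserted (new insertions all contain $a$), so $A$ was present before the new shift; the witness $(A\setminus\{w\})\cup\{a\}$ furnished by the old property contains $a$, and sets containing $a$ are never removed by $\shift_{a\leftarrow w'}$.

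I do not anticipate a real obstacle in either part. The one point requiring mild care is the asymmetry in part (2) that the only sets touched by $\shift_{a\leftarrow w'}$ are those missing $a$, while the witnesses we need to preserve all contain $a$; this is what lets the shift properties accumulate cleanly and, incidentally, what makes the conclusion independent of the order in which the $\shift_{a\leftarrow w}$ are performed.
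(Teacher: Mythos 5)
Your proposal is correct. Note that the paper offers no proof of this lemma at all — it is introduced with ``We record some easy facts about near-cones'' — so there is nothing to compare against; your arguments are the natural ones and they check out. In part (1), the case split is clean: when $a\notin B$ the near-cone condition applied to $B\cup\{v\}$ at $v$ gives $B\cup\{a\}\in\Delta$, and when $a\in B$ the image $(B\setminus\{a\})\cup\{v\}$ unioned with $\{a\}$ is literally $B\cup\{v\}$; the two cases are separated by membership of $v$, so the map is injective and $f_r(\link_\Delta v)\leq f_r(\link_\Delta a)$. In part (2), the key observation — that every set inserted by $\shift_{a\leftarrow w'}$ contains $a$ and every set containing $a$ is retained by $\shift_{a\leftarrow w'}$ — is exactly what makes the properties for the pairs $(a,w)$ accumulate, and your base step correctly extracts from the definition that a surviving $A$ with $w\in A$, $a\notin A$ must have its witness $(A\setminus w)\cup a$ already present and hence retained. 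One point worth making explicit for the lemma to be usable later in the paper: the resulting $\mathcal{A}'$ must still consist of faces of $\Delta$, which holds because each inserted set $(A\setminus w)\cup a$ is a face of $\Delta$ precisely by the near-cone hypothesis. This is immediate, but it is the place where the hypothesis on $\Delta$ (as opposed to just on $\mathcal{A}$) enters part (2).
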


More generally, a simplicial complex $\Delta$ is \emph{shifted with
respect to $\{v_{1}<\cdots<v_{t}\}$} if $\Delta$ is invariant under
all $\shift_{v_{i}\leftarrow w}$ where $w\notin\{v_{1},\dots,v_{i}\}$.
This property may be expressed recursively in terms of the near-cone
property, as follows. The complex $\Delta$ is a \emph{$t$-fold near-cone}
with respect to $\left\{ v_{1}<\cdots<v_{t}\right\} $ if either $t=0$,
or if $\Delta$ is a near-cone with apex vertex $v_{1}$ such that
both $\del_{\Delta}v_{1}$ and $\link_{\Delta}v_{1}$ are $(t-1)$-fold
near-cones with respect to $\{v_{2}<\cdots<v_{t}\}$.
\begin{lem}
The simplicial complex $\Delta$ is shifted with respect to $\{v_{1}<\cdots<v_{t}\}$
if and only if it is a $t$-fold near-cone with respect to $\{v_{1}<\cdots<v_{t}\}$.
\end{lem}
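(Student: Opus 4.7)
The plan is to proceed by induction on $t$, with the base case $t=0$ being vacuous, since both notions impose no constraints then.

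For the inductive step, I begin with the forward direction. Assuming $\Delta$ is shifted with respect to $\{v_{1}<\cdots<v_{t}\}$, invariance under $\shift_{v_{1}\leftarrow w}$ for every $w\neq v_{1}$ is built into the definition, so $\Delta$ is a near-cone with apex $v_{1}$. To complete this direction via the inductive hypothesis, I must show that $\del_{\Delta}v_{1}$ and $\link_{\Delta}v_{1}$ are each shifted with respect to $\{v_{2}<\cdots<v_{t}\}$. For each $i\geq 2$ and each $w\notin\{v_{2},\dots,v_{i}\}$, I check that $\shift_{v_{i}\leftarrow w}$ fixes both subcomplexes. The case $w=v_{1}$ is automatic, since $v_{1}$ is a vertex of neither $\del_{\Delta}v_{1}$ nor $\link_{\Delta}v_{1}$, so no face can contain it. The case $w\neq v_{1}$ reduces to a short verification that $\shift_{v_{i}\leftarrow w}$ commutes with deletion of $v_{1}$ and with taking the link of $v_{1}$: if $A'\in\del_{\Delta}v_{1}$ contains $w$ but not $v_{i}$, then $A'\in\Delta$, so $A'\setminus w\cup v_{i}\in\Delta$, and this set still avoids $v_{1}$; and if $B\in\link_{\Delta}v_{1}$ contains $w$ but not $v_{i}$, then $\{v_{1}\}\cup B\in\Delta$ contains $w$ but not $v_{i}$, and passing its shift through $\Delta$ and stripping $v_{1}$ yields $B\setminus w\cup v_{i}\in\link_{\Delta}v_{1}$.

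For the reverse direction, suppose $\Delta$ is a $t$-fold near-cone with respect to $\{v_{1}<\cdots<v_{t}\}$. The near-cone hypothesis handles the operations $\shift_{v_{1}\leftarrow w}$. For $i\geq 2$ and $w\notin\{v_{1},v_{2},\dots,v_{i}\}$, take any face $A\in\Delta$ with $w\in A$ and $v_{i}\notin A$; I split on whether $v_{1}\in A$. If $v_{1}\in A$, then $A\setminus v_{1}\in\link_{\Delta}v_{1}$, which by the inductive hypothesis is shifted with respect to $\{v_{2}<\cdots<v_{t}\}$, so $(A\setminus v_{1})\setminus w\cup v_{i}\in\link_{\Delta}v_{1}$, and consequently $A\setminus w\cup v_{i}\in\Delta$. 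If $v_{1}\notin A$, then $A\in\del_{\Delta}v_{1}$, and the same inductive hypothesis yields $A\setminus w\cup v_{i}\in\del_{\Delta}v_{1}\subseteq\Delta$.

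The proof is essentially a definition-unwinding once the right decomposition is in place; the only delicate point is bookkeeping of which $w$ are permitted at each layer and confirming that the shifting operation does in fact commute with the passage to $\link_{\Delta}v_{1}$ and $\del_{\Delta}v_{1}$. I do not expect any substantial obstacle beyond this bookkeeping.
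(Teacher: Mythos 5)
Your proof is correct and follows essentially the same route as the paper: establish that shiftedness with respect to $\{v_{1}<\cdots<v_{t}\}$ is equivalent to being a near-cone with apex $v_{1}$ together with $\del_{\Delta}v_{1}$ and $\link_{\Delta}v_{1}$ being shifted with respect to $\{v_{2}<\cdots<v_{t}\}$, then induct on $t$. The paper declares that equivalence ``immediate from definition''; you simply spell out the commutation of $\shift_{v_{i}\leftarrow w}$ with deletion and link, and your bookkeeping (including the $w=v_{1}$ case) checks out.
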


\begin{proof}
It is immediate from definition that of $\shift_{v\leftarrow w}$
that $\Delta$ is shifted with respect to $\{v_{1}<v_{2}<\cdots<v_{t}\}\neq\emptyset$
if and only if $\Delta$ is a near-cone with respect to $v_{1}$,
and both $\del_{\Delta}v_{1}$ and $\link_{\Delta}v_{1}$ are shifted
with respect to $\{v_{2}<\cdots<v_{t}\}$. The lemma now follows by
a straightforward induction.
\end{proof}
We remark that Seyed Fakhari \cite{Fakhari:2017} defined a simplicial
complex $\Delta$ to be a ``$t$-near-cone'' if $t=0$, or if $\Delta$
is a near-cone with apex vertex $v_{1}$ such that $\del_{\Delta}v_{1}$
is a $(t-1)$-near-cone. Every $t$-fold near-cone in our sense is
a $t$-near-cone in Seyed Fakhari's sense, but the converse is not
true.

The most convenient way to construct a simplicial complex that is
shifted with respect to $\{v_{1}<\cdots<v_{t}\}$ is to take a $t$-fold
cone; that is, a complex where every facet contains $\{v_{1},\dots,v_{t}\}$.
Of course, in this case, the order of $v_{1},\dots,v_{t}$ does not
matter. An independence complex of a graph $G$ is a $t$-fold cone
if and only if $G$ has $t$ isolated vertices. However, there are
independence complexes of graphs that are shifted with respect to
an initial set of vertices, but that are not cones. Indeed, Klivans
showed in \cite{Klivans:2007} that the independence complex of a
graph $G$ is shifted if and only if $G$ is a threshold graph.

The following generalizes the fact that combinatorial shifting stabilizes
when taken over all pairs of elements.
\begin{lem}
\label{lem:CombShiftNearcone} Let $\Delta$ be shifted with respect
to $\{v_{1}<v_{2}<\cdots<v_{t}\}$. If $\mathcal{A}$ is a system
of faces of $\Delta$, then after performing the $\shift_{v_{i}\leftarrow w}$
operation on $\mathcal{A}$ for each $w\notin\left\{ v_{1},\dots,v_{i}\right\} $,
we obtain a system of faces $\mathcal{A}'$ that is shifted with respect
to $\{v_{1}<v_{2}<\cdots<v_{t}\}$.
\end{lem}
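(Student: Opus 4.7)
The plan is to reduce the claim to three separate items: that each admissible combinatorial shift preserves the property $\mathcal{A}\subseteq\Delta$, that iterating such shifts must terminate, and that the stabilized system satisfies the definition of shifted with respect to $\{v_1<\cdots<v_t\}$ by construction.

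First I would fix any $i$ and any $w\notin\{v_1,\dots,v_i\}$, and examine which new sets the operation $\shift_{v_i\leftarrow w}$ can introduce to $\mathcal{A}$. Every such new set has the form $(A\setminus w)\cup v_i$ for some $A\in\mathcal{A}$ containing $w$ but not $v_i$. Because $\Delta$ itself is shifted with respect to $\{v_1<\cdots<v_t\}$, it is invariant under $\shift_{v_i\leftarrow w}$, and so $(A\setminus w)\cup v_i\in\Delta$ whenever $A\in\Delta$. By induction on the number of shifts performed, any family obtained from $\mathcal{A}$ by an admissible sequence of these shifts remains a subsystem of $\Delta$.

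Next I would invoke a standard termination argument. Choose a linear ordering $\sigma$ on the vertex set of $\Delta$ that places $v_1<v_2<\cdots<v_t$ first and orders the remaining vertices arbitrarily. Whenever $w\notin\{v_1,\dots,v_i\}$, one has $\sigma(v_i)<\sigma(w)$; consequently every nontrivial admissible shift strictly decreases the nonnegative integer $\sum_{A}\sum_{x\in A}\sigma(x)$. Iteration therefore stabilizes at some system $\mathcal{A}'$ which is invariant under every $\shift_{v_i\leftarrow w}$ with $w\notin\{v_1,\dots,v_i\}$, and this invariance is exactly the definition of $\mathcal{A}'$ being shifted with respect to $\{v_1<\cdots<v_t\}$.

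I expect the main obstacle to be the first step, namely verifying that admissible shifts on $\mathcal{A}$ do not escape $\Delta$. For a generic simplicial complex, performing $\shift_{v_i\leftarrow w}$ on a family of faces may easily introduce non-faces; it is precisely the shiftedness hypothesis on $\Delta$ that supplies the invariance we need. Once that is secured, the termination argument and the concluding tautology are routine.
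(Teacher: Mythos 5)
Your proof is correct and is essentially the argument the paper has in mind: the paper states this lemma without proof, as a routine generalization of the classical fact (cited to Frankl) that combinatorial shifting stabilizes under iterated application of the $\shift_{i\leftarrow j}$. Your three steps --- that shiftedness of $\Delta$ keeps each $(A\setminus w)\cup v_{i}$ inside $\Delta$, that a weight function decreasing under every nontrivial admissible shift forces termination, and that stabilization is by definition invariance under all the relevant $\shift_{v_{i}\leftarrow w}$ --- supply exactly the intended justification.
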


\subsection{Hilton-Milner}

We will also need a cross-intersecting result of Hilton and Milner,
which will be more general than the cross-intersecting theorem discussed
in the introduction (but still a special case of their most general
result). Recall that a \emph{Sperner family} (also called a \emph{clutter}
or \emph{simple hypergraph}) is a system of subsets with no proper
inclusions.
\begin{thm}[{Hilton and Milner \cite[$p=1$ case of Theorem~2]{Hilton/Milner:1967}}]
\label{thm:HiltonMilner}Let $\mathcal{B}$ and $\mathcal{C}$ be
nonempty Sperner families of subsets of $\{1,\dots,n\}$. If for each
$B\in\mathcal{B}$ and $C\in\mathcal{C}$ we have $\left|B\right|,\left|C\right|\leq r$
and $B\cap C\neq\emptyset$, then 
\[
\left|\mathcal{B}\right|+\left|\mathcal{C}\right|\leq{n \choose r}-{n-r \choose r}+1.
\]
\end{thm}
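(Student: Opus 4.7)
My plan is to reduce the Sperner setting to the $r$-uniform setting via upward compression, then establish the bound in the uniform case by combinatorial shifting and a short case analysis.

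One may assume $n \geq 2r$, as otherwise $\binom{n-r}{r}=0$ and the bound is trivial. Set
\[
\mathcal{B}^{*} = \{B'\subseteq[n] : |B'|=r,\ B'\supseteq B \text{ for some } B \in \mathcal{B}\},
\]
and similarly $\mathcal{C}^{*}$. Since any superset of a set meeting some $C$ also meets $C$, the families $\mathcal{B}^{*},\mathcal{C}^{*}$ remain cross-intersecting. The normalized matching property of the Boolean lattice (iterated Hall's marriage theorem applied between consecutive levels, using $\binom{n}{k}\leq\binom{n}{k+1}$ for $k<n/2$) produces an injection $\mathcal{B}\hookrightarrow\mathcal{B}^{*}$ given by $B\mapsto$ some $r$-superset, and likewise for $\mathcal{C}$. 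Thus $|\mathcal{B}^{*}|\geq|\mathcal{B}|$ and $|\mathcal{C}^{*}|\geq|\mathcal{C}|$, and it suffices to bound $|\mathcal{B}^{*}|+|\mathcal{C}^{*}|$ in the $r$-uniform cross-intersecting setting.

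For the $r$-uniform bound, I would apply $\shift_{i\leftarrow j}$ for $i<j$ simultaneously to $\mathcal{B}^{*}$ and $\mathcal{C}^{*}$ in lockstep. As noted in Section~\ref{sec:Background}, this preserves cardinalities, uniformity, and the cross-intersecting property. In the resulting shifted families, $\mathcal{B}^{*}$ nonempty forces $[r]=\{1,\dots,r\}\in\mathcal{B}^{*}$, and hence every $C\in\mathcal{C}^{*}$ must meet $[r]$, yielding $|\mathcal{C}^{*}|\leq\binom{n}{r}-\binom{n-r}{r}$. If $|\mathcal{B}^{*}|=1$, the desired bound follows immediately. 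Otherwise, $\mathcal{B}^{*}$ also contains the next shifted $r$-set $\{1,\dots,r-1,r+1\}$, and the cross-intersecting hypothesis excludes from $\mathcal{C}^{*}$ every $r$-set of the form $\{r\}\cup X$ with $X\subseteq\{r+2,\dots,n\}$; there are $\binom{n-r-1}{r-1}\geq 1$ such sets when $n\geq 2r$, so the exchange is balanced. Iterating this observation as $|\mathcal{B}^{*}|$ grows completes the argument.

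The main obstacle is the upward compression inequality $|\mathcal{B}^{*}|\geq|\mathcal{B}|$ for Sperner $\mathcal{B}$; although standard, it relies on the normalized matching property rather than being immediate, and careful attention is required to ensure the Hall condition holds uniformly across all subsets of $\mathcal{B}$. Once this is in hand, the remaining shifting and counting steps are routine.
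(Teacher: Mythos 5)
First, a small but real issue at the outset: the case $n<2r$ is not ``trivial'' --- the inequality is actually false there as literally stated (take $n=3$, $r=2$, and $\mathcal{B}=\mathcal{C}$ the three $2$-subsets of $\{1,2,3\}$; these are Sperner and cross-intersecting with $|\mathcal{B}|+|\mathcal{C}|=6>4$). The theorem carries Hilton and Milner's implicit hypothesis $r\leq n/2$, which holds in every application in the paper, so you should assume $n\geq 2r$ as a hypothesis rather than dispose of its complement as trivial. Note also that this statement is quoted from Hilton and Milner without proof, so what follows compares your argument to the closest proof machinery the paper actually contains.

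The substantive gap is the final sentence of your uniform-case argument. The reduction to $r$-uniform families is fine and standard (a symmetric chain decomposition gives the injection cleanly; this is essentially how the paper passes from Theorem~\ref{thm:CrossIntClassic} to Theorem~\ref{thm:CrossIntClassicSperner} via Lemma~\ref{lem:HibiContainInject}), as is the observation that after shifting, $[r]\in\mathcal{B}^{*}$ forces $|\mathcal{C}^{*}|\leq\binom{n}{r}-\binom{n-r}{r}$, which settles $|\mathcal{B}^{*}|=1$. But ``iterating this observation as $|\mathcal{B}^{*}|$ grows'' is precisely the content of the theorem, and your single exchange does not iterate. You have shown that a second set in $\mathcal{B}^{*}$ excludes at least $\binom{n-r-1}{r-1}\geq 1$ sets from the pool available to $\mathcal{C}^{*}$; but when $|\mathcal{B}^{*}|=k$ you must show that the $k-1$ sets of $\mathcal{B}^{*}$ beyond $[r]$ exclude at least $k-1$ \emph{distinct} sets from that pool, i.e.\ you need an injection from $\mathcal{B}^{*}\setminus\{[r]\}$ into the $r$-sets meeting $[r]$ that are forbidden from $\mathcal{C}^{*}$. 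Nothing in the proposal produces such an injection, and naively repeating your exchange fails because the sets excluded by a third, fourth, \dots\ member of $\mathcal{B}^{*}$ may coincide with those already excluded. As written, the argument only yields $|\mathcal{B}|+|\mathcal{C}|\leq|\mathcal{B}^{*}|+\binom{n}{r}-\binom{n-r}{r}-\binom{n-r-1}{r-1}$ for $|\mathcal{B}^{*}|\geq 2$, which suffices only when $|\mathcal{B}^{*}|\leq\binom{n-r-1}{r-1}+1$. The missing step is the heart of the cross-intersecting Hilton--Milner theorem; the standard way to supply it after shifting is an induction on $n$ using the decomposition into $\mathcal{A}(n)$ and $\mathcal{A}(\neg n)$ together with Lemma~\ref{lem:CrossIntersectingAtVn}, exactly as in the paper's proof of Theorem~\ref{thm:CrossIntClassic} in Section~\ref{sec:Cross-Intersecting}, of which the uniform case here is the simplex specialization.
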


Thus, the uniform case of Theorem~\ref{thm:HiltonMilner} is the
special case of Theorem~\ref{thm:CrossIntClassic} on a simplex.

Indeed, Theorem~\ref{thm:CrossIntClassic} also generalizes to Sperner
families of faces. Hibi proved the following lemma in \cite[Section 2]{Hibi:1989}
for hereditary multiset systems, but we state for simplicial complexes.
\begin{lem}[Hibi \cite{Hibi:1989}]
\label{lem:HibiContainInject} Let $\Delta$ be a simplicial complex
whose smallest facet has $d$ vertices. If $s\leq r\leq d-s$, then
there is an injection $\psi:F_{s}(\Delta)\to F_{r}(\Delta)$ such
that $A\subseteq\psi(A)$.
\end{lem}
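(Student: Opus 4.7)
The plan is to establish the injection via Hall's marriage theorem, applied to the bipartite graph on $F_s(\Delta) \sqcup F_r(\Delta)$ with edges given by containment. So I will verify that for every subfamily $\mathcal{F}\subseteq F_s(\Delta)$, the set $N(\mathcal{F})$ of $r$-element faces containing at least one element of $\mathcal{F}$ satisfies $|N(\mathcal{F})|\geq|\mathcal{F}|$, and then conclude by Hall's theorem.

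First I would establish a lower bound on the degree of each $A\in F_s(\Delta)$ in this bipartite graph. Because every facet of $\Delta$ has at least $d$ vertices, the face $A$ is contained in some facet $F$ with $|F|\geq d$, so $F\setminus A$ is a face of $\link_\Delta A$ of size at least $d-s$. Every $(r-s)$-subset of $F\setminus A$, united with $A$, yields an $r$-face of $\Delta$ containing $A$. This gives at least $\binom{d-s}{r-s}$ choices (note $r-s\geq 0$ and $r-s\leq d-s$, so this binomial coefficient makes sense and is positive).

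Next I would perform a double count of the pairs $(A,B)$ with $A\in\mathcal{F}$, $B\in F_r(\Delta)$, and $A\subseteq B$. Counting by $A$ yields at least $|\mathcal{F}|\cdot\binom{d-s}{r-s}$, while counting by $B$ (which necessarily lies in $N(\mathcal{F})$) yields at most $|N(\mathcal{F})|\cdot\binom{r}{s}$, since each $r$-element face has exactly $\binom{r}{s}$ subsets of size $s$. Therefore
\[
|\mathcal{F}|\cdot\binom{d-s}{r-s}\leq |N(\mathcal{F})|\cdot\binom{r}{s}.
\]
The hypothesis $r\leq d-s$ gives $d-s\geq r$, and hence $\binom{d-s}{r-s}\geq \binom{r}{r-s}=\binom{r}{s}$. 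Combining these two inequalities, $|N(\mathcal{F})|\geq|\mathcal{F}|$, so Hall's condition holds and the desired injection exists.

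The main subtlety to get right is the numerical comparison of binomial coefficients together with the degree bound in the link. Neither is deep, but they depend critically on the joint use of both inequalities $s\leq r$ and $r\leq d-s$ in the hypothesis; for instance, if one only assumed $r\leq d$, the argument would fail because an $s$-face could be forced to share too many of its $r$-face extensions with other elements of $\mathcal{F}$. Once these two elementary bounds are in place, Hall's theorem finishes the proof cleanly, and no further structural information about $\Delta$ (such as shiftedness or depth) is needed.
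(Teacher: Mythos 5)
Your proof is correct and complete. The paper itself does not prove this lemma---it only cites Hibi \cite{Hibi:1989}---so there is no internal argument to compare against, but your route is the standard one for statements of this ``normalized matching'' type: a degree lower bound of ${d-s \choose r-s}$ obtained by extending $A$ inside a facet of size at least $d$, an upper bound of ${r \choose s}$ on the number of members of $\mathcal{F}$ inside a given $r$-face, the comparison ${d-s \choose r-s}\geq{r \choose r-s}={r \choose s}$ coming from $d-s\geq r$, and Hall's theorem. All three steps check out, including the edge cases $s=r$ and $F_{s}(\Delta)=\emptyset$, and you correctly use only the minimum facet size hypothesis and both inequalities $s\leq r\leq d-s$.
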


Now if a Sperner family $\mathcal{A}$ consists of faces of $\Delta$
having at most $r$ vertices, then we may repeatedly apply Lemma~\ref{lem:HibiContainInject}
to augment the sets that are smaller than $r$. See also \cite[Section 3]{Borg:2009}.
We obtain the following generalization of Theorem~\ref{thm:CrossIntClassic}.
\begin{thm}
\label{thm:CrossIntClassicSperner}Let $\Delta$ be a simplicial complex
of depth $d-1$, and let $2\leq r\leq d/2$. Let $\mathcal{A},\mathcal{B}$
be nonempty cross-intersecting Sperner families consisting of faces
of $\Delta$, each with at most $r$ vertices.

If $\Delta$ is shifted with respect to $\{v_{1}<\cdots<v_{r}\}$,
then 
\[
\left|\mathcal{A}\right|+\left|\mathcal{B}\right|\leq f_{r}(\Delta)-\gamma+1,
\]
where $\gamma$ is the number of $r$ element faces containing no
element of $\{v_{1},\dots,v_{r}\}$.
\end{thm}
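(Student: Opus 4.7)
The plan is to reduce Theorem~\ref{thm:CrossIntClassicSperner} to its uniform version, Theorem~\ref{thm:CrossIntClassic}, by iteratively augmenting the small faces in $\mathcal{A}$ and $\mathcal{B}$ using Lemma~\ref{lem:HibiContainInject}, following the strategy sketched in the paper's preceding discussion. The hypothesis $\depth\Delta=d-1$ forces the smallest facet of $\Delta$ to contain at least $d$ vertices, and combined with $r\leq d/2$ this leaves enough room to apply Hibi's injection at every intermediate size $s<r$.

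Formally, I would induct on the ``defect'' $\sum_{A\in\mathcal{A}}(r-|A|)+\sum_{B\in\mathcal{B}}(r-|B|)$. When the defect is zero, both families consist entirely of $r$-element faces, and the bound is immediate from Theorem~\ref{thm:CrossIntClassic}. Otherwise, let $s<r$ be the smallest face size occurring in $\mathcal{A}\cup\mathcal{B}$, and write $\mathcal{A}_s:=\mathcal{A}\cap F_s(\Delta)$ and $\mathcal{B}_s:=\mathcal{B}\cap F_s(\Delta)$. Since $s\leq r-1$ and $r\leq d/2$, we have $2s+1\leq d$, so Lemma~\ref{lem:HibiContainInject} applied with the pair $(s,s+1)$ yields an injection $\psi:F_s(\Delta)\to F_{s+1}(\Delta)$ with $A\subseteq\psi(A)$. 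Define
\[
\mathcal{A}':=(\mathcal{A}\setminus\mathcal{A}_s)\cup\psi(\mathcal{A}_s),\qquad \mathcal{B}':=(\mathcal{B}\setminus\mathcal{B}_s)\cup\psi(\mathcal{B}_s),
\]
using the \emph{same} $\psi$ on both sides. The defect of $(\mathcal{A}',\mathcal{B}')$ drops by $|\mathcal{A}_s|+|\mathcal{B}_s|\geq 1$, so the inductive hypothesis will finish the proof once we check that $\mathcal{A}',\mathcal{B}'$ inherit the hypotheses of the theorem with the same cardinalities.

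Three verifications are required, each an easy consequence of the Sperner property of the original families. First, $|\mathcal{A}'|=|\mathcal{A}|$ and $|\mathcal{B}'|=|\mathcal{B}|$, because if $\psi(A)\in\mathcal{A}$ for some $A\in\mathcal{A}_s$, then $A\subsetneq\psi(A)$ would contradict the Sperner property of $\mathcal{A}$, while injectivity of $\psi$ prevents collisions among the new size-$(s+1)$ faces. Second, $\mathcal{A}'$ (and symmetrically $\mathcal{B}'$) remains Sperner: any proper containment $\psi(A)\subsetneq C$ with $C\in\mathcal{A}\setminus\mathcal{A}_s$ would give $A\subsetneq C$ inside the original $\mathcal{A}$, again a contradiction, and the reverse direction $C\subsetneq\psi(A)$ is impossible since $|C|\geq s+1=|\psi(A)|$. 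Third, cross-intersection is preserved because $\psi(A)\cap\psi(B)\supseteq A\cap B\neq\emptyset$, and intersections only grow when only one side is replaced. I expect the Sperner check to be the main technical point, since it requires ruling out new containments between freshly created size-$(s+1)$ faces and the pre-existing larger faces of $\mathcal{A}$ and $\mathcal{B}$; once this is in place, the proof is essentially bookkeeping on top of Theorem~\ref{thm:CrossIntClassic}.
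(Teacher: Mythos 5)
Your proposal is correct and is essentially the paper's own argument: the paper derives Theorem~\ref{thm:CrossIntClassicSperner} from Theorem~\ref{thm:CrossIntClassic} precisely by repeatedly applying Lemma~\ref{lem:HibiContainInject} to augment the sub-$r$-element faces, and you have simply filled in the bookkeeping (induction on the defect, preservation of cardinality, the Sperner property, and cross-intersection) that the paper leaves to the reader. The numerical check $2s+1\leq d$ and the use of the Sperner property to rule out collisions are exactly the right details to verify.
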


\section{\label{sec:Strict-EKR}Strict EKR}

In this section, we prove Theorem~\ref{thm:StrictEKR}. We follow
the general strategy discussed in Section~\ref{subsec:MainStrat}
of the introduction.

We first fix some notation. For some $r<d/2$, let $\mathcal{A}$
be an intersecting family of $r$-faces with no common intersection.
We wish to show that $\left|\mathcal{A}\right|$ is strictly smaller
than $f_{r-1}(\link_{\Delta}a)$, where $\Delta$ is shifted with
respect to $a$. The result is trivial for $r=1$, so we may assume
$r\geq2$.\vspace{-0.06cm}

\subsection{Reduction}

Since $\Delta$ is shifted with respect to $a$, we may perform combinatorial
shifting operations $\shift_{a\leftarrow w}$ on $\mathcal{A}$. We
do so as long as the operations do not result in a common intersection.

If no such operation results in a common intersection, then by Lemma~\ref{lem:NearConeFacts},
the system stabilizes in a system $\mathcal{A}'$ that is shifted
with respect to $a$. Since the stabilized system has no common intersection,
we have a set $A\in\mathcal{A}'$ with $a\notin A$. By definition
of $\shift_{a\leftarrow w}$, we see that $\mathcal{A}$ spans a simplex
boundary on $a\cup A$.

Corollary~\ref{cor:ShiftedNontrivial} gives that $\shiftext\mathcal{A}$
has no common intersection. Now Lemma~\ref{lem:AlgebraicShiftingFacts}
reduces to the case where $\Delta$ is shifted.

Otherwise, we have an operation $\shift_{a\leftarrow v}$ that yields
a system with common intersection.\vspace{-0.06cm}

\subsection{Simpler cases}

We consider separately the two cases that we reduced to above.

The first simple case is where $\Delta$ and $\mathcal{A}$ are shifted.
In this case, the theorem is a result of Borg \cite[Theorem 2.7]{Borg:2009}.
Alternatively, since shifted complexes have facet depth by Proposition~\ref{prop:VDhasFacetDepth},
this is a special case of Theorem~\ref{thm:SimplicialHM}. It is
of note that a shifted system $\mathcal{A}$ with no common intersection
is covered by the last part of the proof of Theorem~\ref{thm:SimplicialHM}.

The second simpler case is where $\mathcal{A}$ has no common intersection,
but where $\shift_{a\leftarrow v}\mathcal{A}$ does have a common
intersection, necessarily at $a$. This case is parallel to Case~2
of the proof of Theorem~\ref{thm:SimplicialHM}.

Since $\shift_{a\leftarrow v}\mathcal{A}$ has a common intersection
at $a$, every face of $\mathcal{A}$ must contain either $v$, or
$a$, or both. Moreover, if $a\notin A$ for a set $A\in\mathcal{A}$,
then $A\setminus v\cup a$ is not in $\mathcal{A}$, as otherwise
$A$ would be preserved in $\shift_{a\leftarrow v}\mathcal{A}$, contradicting
the common intersection assumption. We obtain that the following map
is an injection:\vspace{-0.18cm}

\begin{align*}
\varphi:\mathcal{A} & \to F_{r-1}(\link_{\Delta}a)\\
A & \mapsto\begin{cases}
A\setminus a & \text{if }a\in A\\
A\setminus v & \text{otherwise}.
\end{cases}
\end{align*}
To complete the proof in this case, it is enough to show that $\varphi$
is not surjective. Since without loss of generality, we may have all
faces containing $\left\{ a,v\right\} $ in $\mathcal{A}$, we focus
on the faces of the image that do not contain $v$.

Let $T$ be a facet of $\Delta$ that contains $a$, and that contains
at least one $C_{0}$ in $\mathcal{A}$ with $a\notin C_{0}$; thus,
$v\in C_{0}\subseteq T$. Such a $T$ clearly exists: indeed, since
$\Delta$ is a near-cone whose smallest facets have more than $r$
vertices, we may find a facet containing $a\cup A$ for any $A\in\mathcal{A}$.
We notice that all subsets of $T\setminus\{a,v\}$ are faces of $\link_{\Delta}a$
that avoid $v$.

We now consider the following Sperner families of subsets of $T\setminus\{a,v\}$.
\begin{align*}
\mathcal{B}_{T} & \text{ consisting of the maximal sets in \ensuremath{\left\{  \varphi(B)\cap T\,:\,v\notin B,B\in\mathcal{A}\right\} } }\\
\mathcal{C}_{T} & =\left\{ \varphi(C)\,:\,a\notin C,C\in\mathcal{A},\varphi(C)\subseteq T\right\} .
\end{align*}
These Sperner families are cross-interesting, since if $B,C$ in $\mathcal{A}$
are such that $v\notin B$ and $a\notin C$, then $B$ and $C$ intersect
at a vertex other than $a$ or $v$, hence (since $C\setminus v\subseteq T$)
at a vertex of $T$. It follows from the same argument that $\mathcal{B}_{T}$
is nonempty, and we note that $\mathcal{C}_{T}$ is nonempty by choice
of $T$.

We now apply the (classical) cross-intersecting bound of Theorem~\ref{thm:HiltonMilner}:

\[
\left|\mathcal{B}_{T}\right|+\left|\mathcal{C}_{T}\right|\leq{\left|T\right|-2 \choose r-1}-{\left|T\right|-r-1 \choose r-1}+1<{\left|T\right|-2 \choose r-1}.
\]
Here, the strict inequality holds since by the depth condition $\left|T\right|\geq d$,
so as $d\geq2r+1$, also ${\left|T\right|-r-1 \choose r-1}$ is greater
than ${r \choose r-1}$.

Finally, the $(r-1)$-sets in $\mathcal{B}_{T}\cup\mathcal{C}_{T}$
are exactly the subsets of $T\setminus\{a,v\}$ that lie in the image
of $\varphi$. Since (by the above count) at least one such subset
is missing, the injection $\varphi:\mathcal{A}\to F_{r-1}(\link_{\Delta}a)$
is not surjective, completing the proof.
\begin{rem}
An alternative approach to the case where $\shift_{a\leftarrow v}$
yields a common intersection is to consider the cross-intersecting
families of $(r-1)$-element faces $\mathcal{B}=\left\{ B\setminus a:v\notin B,B\in\mathcal{A}\right\} $
and $\mathcal{C}=\left\{ C\setminus v:a\notin C,C\in\mathcal{A}\right\} $,
both contained in $\link_{\del_{\Delta}v}a=\del_{\link_{\Delta}a}v$.
Corollary~\ref{cor:DepthBoundLink} and Lemma~\ref{lem:DepthBoundDel}
tell us that passing to the deletion/link reduces the depth by at
most $2$, while we have reduced the size of our sets by $1$. Algebraically
shifting and applying Theorem~\ref{thm:CrossIntClassic} (noting
that $\gamma>1$) now gives the desired.

\end{rem}

\section{\label{sec:HM}Stability}

In this section, we prove our Hilton-Milner type theorem, Theorem~\ref{thm:SimplicialHM}.
Our proof will reduce to Theorem~\ref{thm:CrossIntShadow}, which
we prove in the following section.

The theorem is trivial for $r=1$, so we assume without loss of generality
that $r\geq2$. We follow our main strategy: first shifting combinatorially
to reduce to the case where $\mathcal{A}$ spans a simplex boundary,
and then shifting algebraically. Indeed, we perform shifts $\shift_{v_{1}\leftarrow w}$
over all $w\neq v_{1}$. Recall that if $v$ is a vertex of $\Delta$,
then $\depth(\link_{\Delta}v)\geq\depth\Delta-1$. There are two cases:

\medskip{}

\emph{Case 1}: No shift operation $\shift_{v_{1}\leftarrow w}$ yields
a system with common intersection. Then by Lemma~\ref{lem:CombShiftNearcone},
we end in a system $\mathcal{A}'$ that has no common intersection,
and that is shifted with respect to $v_{1}$. Now if $A$ is any face
in $\mathcal{A}'$ that does not contain $v_{1}$, then $\mathcal{A}'$
spans a simplex boundary on $A\cup v_{1}$.

\medskip{}

\emph{Case 2}: The shift operation $\shift_{v_{1}\leftarrow w}$ yields
a system with a common intersection, and we stop just before that
operation. Thus, we have a system where every set contains either
$v_{1}$, $w$, or both. We may assume without loss of generality
that the resulting system has all $r$ element faces with both $v_{1}$
and $w$; otherwise, add these faces to the system.

Let $I=\left\{ v_{2},\cdots,v_{r+1}\right\} \setminus w$, and let
$I_{0}$ be the first $r-1$ vertices of $I$. We may now perform
additional shifts $\shift_{v_{i}\leftarrow x}$, where $v_{i}$ is
in the set $I$, and $x\notin\left\{ v_{1},\dots,v_{i},w\right\} $,
until the system stabilizes in $\mathcal{A}''$. Since we have at
least one face containing $v_{1}$ and not $w$ (and vice-versa),
we have both $I_{0}\cup v_{1}$ and $I_{0}\cup w$ in $\mathcal{A}''$
by the shiftedness property. Since we also have all faces with both
$v_{1}$ and $w$, the system $\mathcal{A}''$ spans a simplex boundary
on $I_{0}\cup v_{1}\cup w$. \medskip{}

Once we have a system that spans a simplex boundary, we may algebraically
shift. By Corollary~\ref{cor:ShiftedNontrivial}, we obtain a shifted
system $\mathcal{A}''$ with no common intersection. We notice that
since $\Delta$ is shifted with respect to $\left\{ v_{1},\dots,v_{r+1}\right\} $,
by iterative application of Lemma~\ref{lem:AlgebraicShiftingFacts}~(\ref{enu:AlgShiftingNC}),
the face numbers of links/deletions of $\left\{ v_{1},\dots,v_{r+1}\right\} $
are preserved under algebraic shifting. In particular, the quantity
$f_{r-1}-\beta+1$ in the statement of the theorem is preserved under
algebraic shifting.

We finish by applying Theorem~\ref{thm:CrossIntShadow} to the families
\begin{align*}
\mathcal{C} & :=\left\{ A\setminus v_{1}\,:\,A\in\mathcal{A}'',\text{ with }v_{1}\in A\right\} \text{ and}\\
\mathcal{D} & :=\left\{ A\text{\ensuremath{\qquad}}:\,A\in\mathcal{A}'',\text{ with }v_{1}\notin A\right\} 
\end{align*}
of faces in $\link_{\Delta}v_{1}$. Note that by shiftedness, we have
both that $\bdry\mathcal{D}\subseteq\mathcal{C}$, and also that $\mathcal{D}$
can be viewed as sitting inside $F_{r}(\link_{\Delta}v_{1})$. The
result follows.

\subsection{Counterexamples of Borg}

Borg in \cite{Borg:2013} shows that $t$-fold cones over certain
disjoint unions of simplices do not obey the condition of Theorem~\ref{thm:SimplicialHM}
for $r\geq t\geq3$. We observe that, since a disconnected complex
has zero depth, these examples have a relatively low depth of $t$.
We do not know to what extent the shiftedness condition of Theorem~\ref{thm:SimplicialHM}
can be weakened.

\section{\label{sec:Cross-Intersecting}Proofs of cross-intersecting theorems}

As in the proof of Theorem~\ref{thm:SimplicialHM}, if $\Delta$
is shifted with respect to $\left\{ v_{1},\dots,v_{r}\right\} $,
then by iterative application of Lemma~\ref{lem:AlgebraicShiftingFacts}~(\ref{enu:AlgShiftingNC}),
the face numbers of links/deletions of $\left\{ v_{1},\dots,v_{r}\right\} $
are preserved under algebraic shifting. Thus, the quantities $f_{r}-\gamma+1$
in both theorems are preserved under algebraic shifting.

Since we do not need to preserve the delicate ``no common intersection''
property, we will not need combinatorial shifting, and use the partial
shiftedness property only to preserve the quantities $\gamma$.

Given a set system $\mathcal{A}$ on ordered vertex set $\left\{ v_{1}<\cdots<v_{n}\right\} $,
write $\mathcal{A}(\neg n)$ for the subfamily of all sets (faces)
in $\mathcal{A}$ that do not contain $v_{n}$, and $\mathcal{A}(n)$
for the family of all sets (faces) $A\setminus v_{n}$ over all $A\in\mathcal{A}$
that contain $v_{n}$. The following lemma is well-known:
\begin{lem}
\label{lem:CrossIntersectingAtVn}Let $\mathcal{A},\mathcal{B}$ be
shifted, cross-intersecting systems of subsets of $\left\{ v_{1}<\cdots<v_{n}\right\} $.
If each set in $\mathcal{A}$ and $\mathcal{B}$ contains at most
$n/2$ vertices, then $\mathcal{A}(\neg n)$ and $\mathcal{B}(\neg n)$
are also cross-intersecting.
\end{lem}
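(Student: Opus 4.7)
The plan is to argue by contradiction, using a ``slack'' vertex granted by the size bound together with shiftedness to swap $v_{n}$ for that slack vertex. Concretely, I want to show that for any $A \in \mathcal{A}$ and $B \in \mathcal{B}$, the intersection $A \cap B$ cannot be contained in $\{v_{n}\}$; this is the essential content of the lemma, since $\mathcal{A}(\neg n)$ and $\mathcal{B}(\neg n)$ consist of the sets already avoiding $v_{n}$, and the point of interest is what happens to the interaction between $\mathcal{A}$ and $\mathcal{B}$ once the role of $v_{n}$ is isolated.

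Suppose toward a contradiction that $A \cap B \subseteq \{v_{n}\}$. Cross-intersection of $\mathcal{A}$ and $\mathcal{B}$ forces $A \cap B = \{v_{n}\}$, so in particular $v_{n} \in A$ and $v_{n} \in B$. The size hypothesis then gives $|A \cup B| = |A| + |B| - 1 \leq n - 1$, so some vertex $v_{i}$ with $i < n$ lies outside $A \cup B$. By shiftedness of $\mathcal{A}$, the swapped set $A' := (A \setminus \{v_{n}\}) \cup \{v_{i}\}$ lies in $\mathcal{A}$. By construction $A'$ is disjoint from $B$: it loses the sole common element $v_{n}$ and gains $v_{i}$, which was chosen outside $B$. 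This contradicts the cross-intersection of $\mathcal{A}$ and $\mathcal{B}$, establishing the lemma.

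The only step that genuinely uses the hypotheses is the slack estimate $|A \cup B| \leq n-1$, where the bound $r \leq n/2$ is exactly what is needed; shiftedness then executes the single rotation that closes the contradiction. I do not expect any genuine obstacle here — this is the classical Frankl rotation trick, and it transfers verbatim to the present setting because neither shiftedness nor cross-intersection on the ambient vertex set $\{v_{1},\ldots,v_{n}\}$ interacts with any complex structure. The same rotation simultaneously yields the natural companion statement that the link families $\mathcal{A}(n)$ and $\mathcal{B}(n)$ are cross-intersecting on $\{v_{1},\ldots,v_{n-1}\}$, which is the form in which a lemma of this kind is typically combined with the present one when reducing cross-intersection arguments from $n$ to $n-1$ vertices in proofs such as those of Theorems~\ref{thm:CrossIntClassic} and \ref{thm:CrossIntShadow}.
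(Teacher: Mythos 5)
Your proof is correct and is essentially identical to the paper's: the paper's one-line argument is exactly this rotation, observing that if $A\cap B=\{v_{n}\}$ then $(A\setminus v_{n}\cup v_{i})\cap B=\emptyset$ for any $v_{i}\notin A\cup B$, whose existence follows from the size bound. You also rightly identified that the substantive content (and the form actually invoked in the proofs of Theorems~\ref{thm:CrossIntClassic} and \ref{thm:CrossIntShadow}) is that $A\cap B\neq\{v_{n}\}$, i.e.\ that the link families $\mathcal{A}(n)$ and $\mathcal{B}(n)$ are cross-intersecting, since the statement about the subfamilies $\mathcal{A}(\neg n)$ and $\mathcal{B}(\neg n)$ is immediate.
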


\begin{proof}
If $A\cap B=\left\{ v_{n}\right\} $, then $\left(A\setminus v_{n}\cup v_{i}\right)\cap B=\emptyset$
for any $v_{i}\notin A\cup B$.
\end{proof}

\subsection{Proof of Theorem~\ref{thm:CrossIntShadow}}

By algebraic shifting, it is enough to prove for a shifted complex
on ordered vertex set $\left\{ v_{1}<\cdots<v_{n}\right\} $, and
for shifted sets $\mathcal{A},\mathcal{B}$.

We induct on $n$, considering $\link_{\Delta}v_{n}$ (containing
faces from $\mathcal{A}(n)$, $\mathcal{B}(n)$) and $\del_{\Delta}v_{n}$
(containing faces from $\mathcal{A}(\neg n)$, $\mathcal{B}(\neg n)$).
The base cases are $r=1$ and where $\Delta$ is a simplex. Notice
that by shiftedness, $\mathcal{A}(\neg n)$ contains $\left\{ v_{1},\dots,v_{r-1}\right\} $
and $\mathcal{B}(\neg n)$ contains $\left\{ v_{1},\dots,v_{r}\right\} $.

Since in a shifted complex $\depth\left(\del_{\Delta}v_{n}\right)\geq\depth\Delta$,
we have by induction that 
\[
\left|\mathcal{A}(\neg n)\right|+\left|\mathcal{B}(\neg n)\right|\leq f_{r}(\del_{\Delta}v_{n})-\gamma_{r}(\neg n)+1,
\]
where $\gamma_{r}(\neg n)$ is the number of $r-1$ element faces
in $\del_{\Delta}v_{n}$ containing no element of $\left\{ v_{1},\dots,v_{r}\right\} $.

If $\mathcal{A}(n)$ is empty, then the shadow condition yields that
$\mathcal{B}(n)$ is also empty. If $\mathcal{B}(n)$ is empty, then
since $\left\{ v_{1},\dots,v_{r}\right\} $ is in $\mathcal{B}(\neg n)$,
every element of $\mathcal{A}(\neg n)$ must intersect $\left\{ v_{1},\dots,v_{r}\right\} $,
and the result follows immediately.

Finally, if both $\mathcal{A}(n)$ and $\mathcal{B}(n)$ are nonempty,
then Lemma~\ref{lem:CrossIntersectingAtVn} yields that they are
cross-intersecting. Since the minimal facet size of $\link_{\Delta}v_{n}$
is at most one smaller than that of $\Delta$, induction gives us
that 
\[
\left|\mathcal{A}(n)\right|+\left|\mathcal{B}(n)\right|\leq f_{r-1}(\link_{\Delta}v_{n})-\gamma_{r-1}(n)+1,
\]
 where $\gamma_{r-1}(n)$ is the number of $r-2$ element faces in
$\link_{\Delta}v_{n}$ containing no element of $\left\{ v_{1},\dots,v_{r-1}\right\} $.
As $\gamma_{r-1}(n)$ is at least one larger than the number $\gamma_{r}(n)$
of $r-2$ element faces in $\link_{\Delta}v_{n}$ containing no element
of $\left\{ v_{1},\dots,v_{r}\right\} $, and as $\gamma_{r}(n)+\gamma_{r}(\neg n)=\gamma$,
the result follows.

\subsection{Proof of Theorem~\ref{thm:CrossIntClassic}}

The proof is similar to that of Theorem~\ref{thm:CrossIntShadow},
and we highlight only a few of the differences.

By algebraic shifting, it is enough to prove for a shifted simplicial
complex and shifted families $\mathcal{A},\mathcal{B}$. The cases
where $\mathcal{A}(n)$ and $\mathcal{B}(n)$ are empty are completely
symmetric, and are similar to the case in the proof of Theorem~\ref{thm:CrossIntShadow}
where $\mathcal{B}(n)$ is empty.

The proof is otherwise entirely similar to that of Theorem~\ref{thm:CrossIntShadow},
and we omit the details.
\begin{rem}
Alternatively, one can reduce to shifted as above, and then appeal
to \cite[Theorem 1.2]{Borg:2010}.
\end{rem}

\bibliographystyle{3_Users_russw_Documents_Research_mypapers_Stric___theorems_for_simplicial_complexes_hamsplain}
\bibliography{2_Users_russw_Documents_Research_mypapers_Stric___do_theorems_for_simplicial_complexes_Master}

\end{document}